\newcommand{\ec}[1]{\textcolor{purple}{\textbf{[EC: } #1\textbf{]}}}
\let\originalleft\left
\let\originalright\right
\renewcommand{\left}{\mathopen{}\mathclose\bgroup\originalleft}
\renewcommand{\right}{\aftergroup\egroup\originalright}
\newtheorem{thm}{Theorem}[section]
\newtheorem{lem}[thm]{Lemma}
\newtheorem{claim}[thm]{Claim}
\newtheorem{prop}[thm]{Proposition}
\newtheorem{prob}[thm]{Problem}
\newcommand{\ord}{{\operatorname{ord}}}
\newcommand{\ind}{{\operatorname{ind}}}
\newcommand{\aut}{{\operatorname{Aut}}}
\theoremstyle{definition}
\newtheorem{cons}[thm]{Construction}
\newcommand{\eps}{\varepsilon}
\newcommand{\paren}[1]{\left( #1 \right)}
\newcommand{\brac}[1]{\left\{ #1 \right\}}
\newcommand{\Z}{\mathbb{Z}}
\newcommand{\R}{\mathbb{R}}
\newcommand{\C}{\mathscr{C}}
\begin{document}
 
\title{Ordered and colored subgraph density problems}
\author{Emily Cairncross\thanks{Department of Mathematics, Statistics and Computer Science, University of Illinois, Chicago, IL 60607. Email: emilyc10@uic.edu. Research partially supported by NSF Award
DMS-1952767.} \and Dhruv Mubayi\thanks{Department of Mathematics, Statistics and Computer Science, University of Illinois, Chicago, IL 60607. Email: mubayi@uic.edu. Research partially supported by NSF Awards 
DMS-1952767 and DMS-2153576, and by a Simons Fellowship.} }

\maketitle

\begin{abstract}
    We consider three extremal problems about the number of copies of a fixed graph in another larger graph. First, we correct an error in a result of  Reiher and Wagner \cite{RW} and prove that the number of $k$-edge stars  in a graph with  density $x \in [0, 1]$ is asymptotically maximized by a clique and isolated vertices or its complement. Next,  among   ordered $n$-vertex graphs with $m$ edges, we  determine the maximum and minimum number of copies of a $k$-edge star whose nonleaf vertex is minimum among all vertices of the star. Finally, for $s \ge 2$, we define a particular  3-edge-colored complete graph $F$ on $2s$ vertices with colors blue, green and red, and  determine, for each $(x_b, x_g)$ with $x_b+x_g\le 1$ and $x_b, x_g \ge 0$,   the maximum density of $F$ in a large graph whose blue, green and red  edge sets have densities $x_b, x_g$ and $1-x_b-x_g$, respectively. These are the first nontrivial examples of colored graphs for which such complete results are proved.
\end{abstract}

\section{Introduction}

The {\em density} of a graph $G$  with $n$ vertices and $m$ edges is
$\varrho(G):=m / \binom{n}{2}$. For a graph $F$ with $k \leq n$ vertices, let $N(F, G)$ be the number of subgraphs of $G$ that are isomorphic to $F$. We are interested in the minimum and maximum values of $N(F, G)$ over graphs $G$ with a given value of $\varrho(G)$ as $n$ grows. We note that $N(F, G) \le (n)_k/|\aut(F)|$ where $\aut(F)$ is the automorphism group of $G$. 
Define the {\em density of $F$ in $G$} to be \[ \varrho (F, G) := \frac{N(F, G) \cdot |\aut (F)| }{ (n)_k}  \in [0,1]. \]

The classical Kruskal-Katona theorem \cite{13, 14} implies that the maximum density of $K_s$ in a graph of  density $x$ is achieved by a clique and isolated vertices. The minimum density of $K_s$  in a graph with density $x$ was determined by Razborov \cite{Raz} for $s=3$, by  Nikiforov \cite{N} for $s=4$, and  by Reiher \cite{R} for all $s \ge 4$; this is achieved by complete multipartite graphs.  Let $S_k$ denote the $k$-edge star. Ahlswede and Katona \cite{AK} determined the maximum number of $S_2$'s in a graph with density $x$. Reiher and Wagner \cite{RW} claim to prove that the asymptotic maximum value of $N(S_k, G)$ when $G$ has density $x$, is achieved by a clique and isolated vertices or its complement. We correct an error in their proof.  We also consider this problem in the setting of ordered graphs and prove the first nontrivial results on $S_k$ whose vertices have a  particular order.

Finally, we consider similar questions in the induced setting. For $G$ with $n$ vertices and $F$ with $k$ vertices, let $N_\ind (F, G)$ be the number of induced subgraphs of $G$ that are isomorphic to $F$. In other words, $N_\ind (F, G)$ is the number of $S\subset V(G)$ such that $G[S]\cong F$. Let $ \varrho_\ind (F,G) = N_\ind (F, G) / \binom{n}{k} $ be the {\em induced density of $F$ in $G$}. 
In \cite{LMR}, the authors consider the region of possible asymptotic values of $\varrho_\ind (F,G)$ for graphs $G$ of fixed density.  By coloring edges in $G$ and $F$ red, and coloring edges in their complements  blue,  it is apparent that counting induced subgraphs is the same as counting copies of a  two-edge-colored clique in a (larger) two-edge-colored clique. This leads us to consider the maximum asymptotic density of $q$-edge-colored cliques in (larger) $q$-edge-colored cliques with given color densities. For $q = 3$, we prove the first nontrivial result in this setting.

In Section \ref{sec:main}, we  state our results. They are proved in Sections \ref{sec:stars}, \ref{sec:ord}, and \ref{sec:color}. 

\section{Statements of results}\label{sec:main}

\subsection{Stars}
Clearly $N(S_k, G) = \sum_{v \in V(G)} \binom{d(v)}{k}$ and $\varrho (S_k, G) = N (S_k, G) \cdot k! / (n)_{k+1}.$
For $x \in [0,1]$, let $I(S_k,x)$ be the supremum of $\lim_{n \to \infty} \varrho (S_k, G_n)$ over all sequences of graphs $(G_n)_{n=1}^{\infty}$ with $|V(G_n)|\rightarrow \infty$, $\varrho(G_n)\rightarrow x$ and for which $\lim_{n \to \infty} \varrho (S_k, G_n)$ exists.  
For each $\gamma \in [0,1]$, let $\eta=1 - \sqrt{1 - \gamma}$. Then $\gamma^{(k + 1) / 2}$ and $\eta + (1 - \eta) \eta^k$ are  the asymptotic $S_k$-densities in a clique with isolated vertices and the complement of a clique with isolated vertices, both with density $\gamma$. Consequently, 
\[ I(S_k, \gamma) \ge \max \{ \gamma^{(k + 1) / 2},~ \eta + (1 - \eta) \eta^k \}. \]
Reiher and Wagner \cite{RW} proved matching upper bounds on $I(S_k, \gamma)$. Their results are stated (and proved) using the language of graphons, which are limit object of graphs. Formally, a {\em graphon} $W$ is a symmetric, measurable function $W: [0,1]^2 \to [0,1]$   (see Lov\'{a}sz \cite{L} for  background on graphons). For a graphon $W$, let $d_W (x) = \int_0^1 W(x,y) ~dy$ be the {\em degree} of $x$ in $W$, let $t (|, W) = \int_{[0,1]^2} W(x, y) ~dx dy$ be the {\em density} of $W$, and let \[ t (S_k, W) = \int_0^1 d_W^k (x) ~dx \] be the {\em homomorphism density of $S_k$ in $W$}.
\begin{thm} [Reiher-Wagner~\cite{RW}] \label{thm:RW}
    Let $W$ be a graphon and let $k$ be a positive integer. Set $\gamma = t(|, W)$ and $\eta = 1 - \sqrt{1 - \gamma}$. Then \[ t (S_k, W) \le \max \{\gamma^{(k + 1) / 2},~ \eta + (1 - \eta) \eta^k \}. \]
\end{thm}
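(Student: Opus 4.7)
The plan is variational. Setting $g(x) := d_W(x)$, I have $t(|,W) = \int_0^1 g = \gamma$ and $t(S_k,W) = \int_0^1 g^k$. By a measure-preserving rearrangement of $[0,1]$ (which leaves both quantities invariant), I may assume $g$ is non-decreasing.

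To pin down the structure of an extremizer $W$, I would compute the first variation of $t(S_k,\cdot)$ under a symmetric perturbation $\psi$ of $W$ with $\int\psi = 0$. Symmetrizing the resulting double integral gives
\[
\delta\,t(S_k,W) \;=\; \tfrac{k}{2}\int_{[0,1]^2}\!\bigl(g(x)^{k-1}+g(y)^{k-1}\bigr)\psi(x,y)\,dx\,dy.
\]
The Lagrange condition combined with the box constraint $W\in[0,1]$ forces a constant $\lambda$ with $W(x,y)=1$ wherever $g(x)^{k-1}+g(y)^{k-1}>\lambda$ and $W(x,y)=0$ wherever $g(x)^{k-1}+g(y)^{k-1}<\lambda$. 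Since $g$ is non-decreasing, this makes the $1$-region of $W$ upward-closed in $[0,1]^2$, so $W$ is a $0/1$-valued \emph{threshold graphon} up to a null set.

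It remains to maximize $\int g^k$ over threshold graphons of density $\gamma$. I would approximate $g$ by a step function with values $a_1<\cdots<a_r$ on intervals of lengths $s_1,\ldots,s_r$; the staircase shape of the $1$-region is then encoded by a non-increasing symmetric threshold map $\tau\colon[r]\to[r+1]$ with $a_i = \sum_{j\ge\tau_i}s_j$. The problem becomes: maximize $\sum_i s_i a_i^k$ subject to $\gamma=\sum_i s_i a_i$ and the combinatorial staircase constraints. The key claim to establish is that any maximum has $r\le 2$: given an $r$-level configuration with $r\ge 3$, I would look for a local perturbation, shifting measure between two well-chosen levels, that by strict convexity of $t\mapsto t^k$ strictly increases $\sum_i s_i a_i^k$ while preserving $\gamma$ and the staircase. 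For $r=2$, a direct enumeration of the valid shapes leaves exactly two candidates: the clique-plus-isolated configuration with $\int g^k = \gamma^{(k+1)/2}$, and its complement with $\int g^k = \eta+(1-\eta)\eta^k$ for $\eta = 1-\sqrt{1-\gamma}$; taking the max over the two yields the theorem.

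The main obstacle is the reduction to $r\le 2$. The Lagrange condition alone does not exclude $r\ge 3$ critical configurations — explicit three-level staircases (e.g.\ $g$ taking values $0$, $s_3$, $s_2+s_3$ on levels of sizes $s_1,s_2,s_3$) satisfy it — so ruling them out as maxima requires a careful case analysis organized by the combinatorics of the staircase, rather than a one-line smoothing argument. This is likely where the gap noted in the introduction arises.
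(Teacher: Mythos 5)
Your outline identifies the right objects (the degree function $g=d_W$, monotone rearrangement, the bang--bang/threshold structure of a maximizer), but as written it is not a proof: the step you yourself flag as the ``main obstacle'' --- showing that an extremal threshold configuration has at most two levels, i.e.\ is a clique plus isolated vertices or its complement --- is the entire content of the theorem, and no argument is given for it. A generic appeal to convexity of $t\mapsto t^k$ does not suffice: as you note, three-level staircases can satisfy the stationarity condition, and shifting measure between two levels of a staircase changes the degrees of \emph{all} levels through the edge constraint, so one cannot preserve both $\gamma$ and the staircase structure while doing a simple two-point smoothing. There are also two smaller holes earlier in the argument: you need compactness of the graphon space to know a maximizer of $t(S_k,\cdot)$ at fixed $\gamma$ exists before taking first variations, and the Euler--Lagrange/box-constraint reasoning only pins $W$ to $\{0,1\}$ off the set where $g(x)^{k-1}+g(y)^{k-1}$ equals the multiplier $\lambda$; when $g$ has flat levels this set has positive measure and $W$ may a priori be fractional there, so ``$W$ is a $0/1$ threshold graphon up to a null set'' needs a further argument.

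For comparison, the paper (following Reiher--Wagner) never classifies critical staircases directly. It proves the bound for a whole class $\C$ of convex functions containing $x^k$ and closed under the substitutions induced by $W\mapsto 1-W$, $W\mapsto[\lambda,W]$ and $W\mapsto[W,\lambda]$, and then argues by contradiction with an induction on the number of parts of a step graphon: among $\delta$-bad pairs with the minimal number of parts $k$, it maximizes the degree second moment $T(W)=\sum_i\alpha_i d_i^2$ (this potential function is exactly the paper's fix of the flawed ``count the $0/1$ blocks'' potential in \cite{RW}); an edge-shifting perturbation (Claim \ref{claim:main}) then forces the threshold structure $\beta_{1i}=0$, $\beta_{jk}=1$, and splitting $P_k$ plus Jensen's inequality reduces to a graphon of the form $[[\cdot,W''],\cdot]$ with fewer parts, handled by Lemmas \ref{lem:lambdaW} and \ref{lem:Wlambda}. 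The closure properties of $\C$ are what make this induction run --- $x^k$ alone is not preserved by these reductions --- and the two-level analysis is absorbed into the convexity lemmas of \cite{RW} rather than done by enumeration. So the work your proposal defers to ``a careful case analysis'' is precisely what this machinery performs; without it (or a substitute), the proposal does not establish the theorem.
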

By the general theory of graphons, Theorem~\ref{thm:RW} gives the same upper bound for $I(S_k, \gamma)$.
There appears to be an error in the proof of \cite[Proposition 3.7]{RW}, which is necessary for the proof of Theorem \ref{thm:RW}. We correct this error and prove Theorem \ref{thm:RW} in Section \ref{sec:stars}.

\subsection{Ordered stars}

 An {\em ordered graph} $G = (V, E)$ is a graph with a total order on $V$. We usually let $V= [n]:=\{1, \ldots, n\}$ with the natural ordering.  When $i<j$ and $\{i,j\} \in E$, we often write $ij \in E$. Let $F$ be an ordered graph on $[s]$. Let $N_{\ord} (F, G)$ be the number of $\{ v_1, \ldots, v_s \} \subseteq [n]$ with $v_1 < v_2 < \cdots < v_s$ such that $v_i v_j \in E (G [v_1, \ldots, v_s])$ whenever $ij \in E(F)$. We consider $N_{\ord} (F, G)$ in the case that $F$ is an appropriate ordered $S_k$. The following constructions provide lower bounds.
 
\begin{cons}\label{cons:quasi-star}
    For positive integers $n$ and $m \le \binom{n}{2}$, let $a$ be the largest integer such that $f(n,a):=\binom{a}{2} + a (n-a) \leq m$. As $f(n,n)={n \choose 2}$, we have $0\le a \le n$.  Set  $b = m - f(n,a)$. Since $f(n, a+1)-f(n,a)=n-a-1$, we conclude that $0\le b <n-a-1$. Let $S_L (n,m)$ be the ordered graph with vertex set $[n]$ and edge set 
    \[\{ vw : v \in [a], w\in [n] \} \cup \{\{a+1, j\}: a+2\le j\le a+b+1\}.\]
    In words,  $S_L (n,m)$ comprises a complete graph on $[a]$, and in addition has all edges between $[a]$ and $[n]\setminus [a]$ and $b$ edges between $a+1$ and the $b$ smallest vertices in $[n]\setminus[a+1]$ (see Figure \ref{fig:quasi-star}). Let $S_R (n,m)$ be defined as $S_L (n,m)$, but where the total order on the vertices is reversed.
\end{cons}

\begin{center}
    \begin{figure}[h]
     \centering
         \includegraphics[scale = 1]{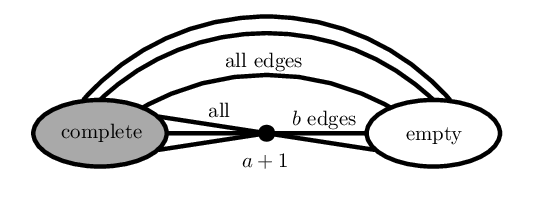}
         \caption{$S_L (n,m)$. }
        \label{fig:quasi-star}
    \end{figure}
\end{center}

Let $S_L (k) := S_L (k+1, k)$ be the ordered {\em left star} and $S_R (k) := S_R (k+1, k)$ the ordered {\em right star}. Note that $S_L(k)$ has $a=1$ and $b=0$ (see Figure \ref{fig:star}). 

\begin{center}
    \begin{figure}[h]
        \centering
        \begin{subfigure}[h]{0.4\textwidth}
            \centering
            \includegraphics[scale = 1]{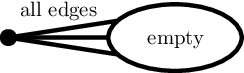}
           
            \label{fig:SL}
        \end{subfigure}
        \begin{subfigure}[h]{0.4\textwidth}
            \centering
            \includegraphics[scale = 1]{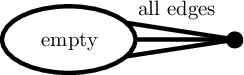}
            
            \label{fig:SR}
        \end{subfigure}
        \caption{$S_L (k)$ and $S_R (k)$.}
        \label{fig:star}
    \end{figure}
\end{center}
Our main result for ordered graphs is the following theorem.

\begin{thm}\label{thm:ord}
    Let $G$ be an ordered graph with vertex set $[n]$ and $m>0$ edges. Then \[ N_\ord (S_L (k), S_R (n,m)) \leq N_\ord (S_L (k), G) \leq N_\ord (S_L (k), S_L (n,m)) .\]
\end{thm}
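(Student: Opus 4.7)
The plan is to reduce the theorem to a majorization inequality on out-degree sequences. Write $d^+(v) = |\{u > v : uv \in E(G)\}|$ for the number of neighbors of $v$ larger than $v$; then
\[
N_\ord(S_L(k), G) = \sum_{v=1}^n \binom{d^+(v)}{k},
\]
which depends only on the multiset $\{d^+(v)\}$. Let $y_1 \ge y_2 \ge \cdots \ge y_n$ be this multiset arranged in decreasing order. Since $d^+(v) \le n - v$ for every $v$, the sorted sequence also satisfies $y_i \le n - i$: otherwise more than $i-1$ of the $d^+(v)$ would exceed $n-i$, but each such $v$ would have to satisfy $n - v > n - i$, i.e., $v < i$, giving at most $i - 1$ candidates.

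A direct computation shows that the sorted $d^+$-sequence of $S_L(n,m)$ is $(n-1, n-2, \ldots, n-a, b, 0, \ldots, 0)$, with partial sums $Q(j) = \min\bigl(m,\, \sum_{i=1}^j (n-i)\bigr)$, and the sorted $d^+$-sequence of $S_R(n,m)$ consists of $b$ copies of $a+1$, followed by $n-a-b$ copies of $a$, followed by $a-1, a-2, \ldots, 1, 0$, with partial sums $P(j) = aj + \min(j,b)$ for $j \le n-a$ and $P(j) = m - \binom{n-j}{2}$ for $j \ge n-a$. Since $x \mapsto \binom{x}{k}$ is discretely convex (its first difference $\binom{x}{k-1}$ is non-decreasing in integer $x$), Karamata's inequality reduces the theorem to the claim
\[
P(j) \le \sum_{i=1}^{j} y_i \le Q(j) \qquad \text{for all } j,
\]
for every sorted feasible sequence $y$ with $\sum y_i = m$.

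The upper bound is immediate from $y_i \le n-i$ and $\sum_i y_i = m$. For the lower bound, the case $j \ge n-a$ follows directly from $\sum_{i=j+1}^n y_i \le \sum_{i=j+1}^n (n-i) = \binom{n-j}{2}$. The main obstacle is the case $j < n-a$, which I would handle by contradiction. Suppose $\sum_{i=1}^j y_i < aj + \min(j, b) \le (a+1)j$. Then the average of $y_1, \ldots, y_j$ is less than $a+1$, so the minimum $y_j$ satisfies $y_j \le a$. By sortedness, $y_i \le a$ for all $i \ge j$, and combining with the tail constraint $y_i \le n-i$ yields
\[
\sum_{i=j+1}^n y_i \le a(n-a-j) + \binom{a}{2}.
\]
Therefore $\sum_{i=1}^j y_i \ge m - a(n-a-j) - \binom{a}{2} = aj + b \ge aj + \min(j, b)$, contradicting the assumption. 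Applying Karamata's inequality with $f(x) = \binom{x}{k}$ then yields both bounds of the theorem.
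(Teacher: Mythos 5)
Your proof is correct, but it takes a genuinely different route from the paper. The paper argues via local exchanges on an extremal graph: it takes $G$ maximizing (resp.\ minimizing) $N_\ord(S_L(k),G)$, repeatedly swaps consecutive vertices whose right-degrees are out of order and shifts single edges, using convexity of $\binom{x}{k}$ to show each move cannot hurt, and then performs a structural analysis of the stable configuration to show its right-degree sequence coincides with that of $S_L(n,m)$ (resp.\ $S_R(n,m)$). You instead prove a global sandwich in the majorization order: using only $\sum_i y_i=m$ and the constraint that the decreasingly sorted right-degrees satisfy $y_i\le n-i$, you show the sorted sequence of $S_L(n,m)$ dominates every feasible $y$, and every feasible $y$ dominates the sorted sequence of $S_R(n,m)$, then finish with Karamata applied to the discretely convex $x\mapsto\binom{x}{k}$. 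I checked the computations: $Q(j)=\min\bigl(m,\sum_{i\le j}(n-i)\bigr)$ and $P(j)=aj+\min(j,b)$ for $j\le n-a$, $P(j)=m-\binom{n-j}{2}$ for $j\ge n-a$ are the correct partial sums, the upper bound is immediate, and your contradiction for $j<n-a$ (averaging gives $y_j\le a$, hence $\sum_{i>j}y_i\le a(n-a-j)+\binom{a}{2}$ and so $\sum_{i\le j}y_i\ge aj+b\ge aj+\min(j,b)$) is sound; the Karamata step for a discretely convex function is standard (e.g.\ via its piecewise-linear extension). Your approach buys directness and generality: it avoids selecting an extremizer and verifying termination of an exchange process, yields the inequality for every $G$ at once, and works verbatim for any convex function of the right-degree sequence. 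The paper's exchange argument is more elementary and self-contained (no appeal to Karamata), and its structural analysis explicitly exhibits how an optimal graph can be transformed into $S_L(n,m)$ or matched in degree sequence with $S_R(n,m)$.
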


Theorem~\ref{thm:ord} implies similar results for $S_R(k)$. There are, up to obvious symmetries, $\lceil(k+1)/2\rceil$ different ordered stars with $k$ edges and, apart from $S_L(k)$,  it remains open to prove analogous results to Theorem~\ref{thm:ord} for them.  Indeed, obtaining sharp bounds for these stars seems nontrivial. We address the first open case when $k=2$.

Let $M$ be  the ordered graph with vertex set $[3]$ and edge set $\{12, 23\}$. It seems very difficult to obtain exact results for $N_\ord(M, G)$ so we consider asymptotic growth rates.
Let \[\varrho_\ord (F, G) := \frac{N_{\ord} (F, G) }{ \binom{n}{s}}\]
be the {\em density of $F$ in $G$}. Since the vertices are ordered, each $s$-tuple of vertices can contribute at most one copy of $F$ so $\varrho_\ord (F, G)  \in [0,1]$. Let $(G_n)_{n=1}^\infty$ be a sequence of ordered graphs with $\lim_{n \to \infty} |V(G_n)| = \infty$. The sequence $(G_n)_{n=1}^\infty$ is {\em $F$-good} if both  $x = \lim_{n \to \infty} \varrho (G_n)$  and $y = \lim_{n\to \infty} \varrho_\ord (F, G_n)$ exist. In this case, $(G_n)_{n=1}^\infty$ {\em realizes} $(x,y)$. Define \begin{align*}
    I_{\ord} (F, x) &:= \sup \{ y : (x,y) \in [0,1] \text{ is realized by some $F$-good } (G_n)_{n=1}^\infty \}, \\
    i_{\ord} (F, x) &:= \inf \{ y : (x,y) \in [0,1] \text{ is realized by some $F$-good } (G_n)_{n=1}^\infty \}.
\end{align*}

For any $x\in [0,1]$, Lov\'{a}sz and Simonovits \cite{LS}  constructed a sequence of $K_3$-good complete multipartite graphs $(G_n)_{n=1}^\infty$ such that $\lim_{n \to \infty} \varrho (G_n) = x$. Set 
\[g_3(x):= \lim_{n \to \infty}\varrho (K_3, G_n).\]
Let $V_1, \ldots, V_k$ be the parts of $G_n$ and let $G_n'$ be an ordered graph obtained from $G_n$ with any vertex ordering for which $u<v$ whenever $u \in V_i$, $v \in V_j$ and $i<j$. Then
$N_\ord (M, G_n') = N_\ord (K_3, G_n)$ and this implies that
$i_\ord(M, x) \le g_3(x)$.
Razborov~\cite{Raz} proved that $i_\ord (K_3, x) \ge g_3 (x)$. Note that $M$ is a subgraph of the ordered $K_3$, so $i_\ord(K_3, x) \le i_\ord(M, x)$.  Therefore,
\[g_3(x) \le i_\ord(K_3, x) \le i_\ord(M, x)\] 
and we conclude that $i_\ord (M, x) = i_\ord (K_3, x) = g_3 (x)$. Determining $I_{\ord} (M, x)$ appears to be more difficult.

\begin{cons}\label{cons:spider}
    We construct a sequence of graphs $(P(n,x))_{n = 1}^\infty$ for any $x \in [0,1]$. For each $n$, define $P (n, x)$ to be the ordered graph on $[n] = A \sqcup B \sqcup C$ where $|B| = \lfloor n (1 - \sqrt{1 - x})  \rfloor$, $||A|-|C||\le 1$, and $a < b < c$ for all $a \in A$, $b\in B$, $c\in C$ with edge set \[ \{ ab : a \in A, b \in B \} \cup \{ b_1 b_2 : b_1, b_2 \in B \} \cup \{ bc : b \in B, c \in C \} .\] A short calculation shows that $\lim_{n \to \infty} \varrho ( P(n, x)) = x$ and 
    \[\lim_{n \to \infty} \varrho_\ord (M, P(n, x)) = - \frac{1}{2} \eta (\eta^2 - 3).\]
\end{cons}

\begin{cons}\label{cons:length}
    We construct a sequence of graphs $(Q(n,x))_{n = 1}^\infty$ for any $x \in [0,1]$. For each $n$, define $Q (n, x)$ to be the ordered graph on $[n]$ with edge set \[ \{ ij : j - i \leq \lfloor (1 - \sqrt{1 - x}) n \rfloor \} .\] A short calculation shows that $\lim_{n \to \infty} \varrho ( Q(n, x)) = x$ and  that for $\eta = 1 - \sqrt{1-x}$,  \[\lim_{n \to \infty} \varrho_\ord (M, Q(n, x)) = \begin{cases}
        6 \eta^3 + 6(1 - 2\eta) \eta^2 & \text{ if } \eta \le 1/2, \\
        2\eta^3 - 6 \eta^2 + 6\eta -1 & \text{ if } \eta \ge 1/2.
    \end{cases} \]
\end{cons}

Constructions~\ref{cons:spider} and~\ref{cons:length} show that for  $\eta = 1 - \sqrt{1-x}$,
\[I_{\ord} (M, x) \ge \max \brac{\lim_{n \to \infty} \varrho_\ord (M, P(n, x)),~ \lim_{n \to \infty} \varrho_\ord (M, Q(n, x))}.\]
It is an interesting open problem to determine if the inequality is sharp.

\begin{prob}
    Is
    \[I_{\ord} (M, x) = \max \brac{\lim_{n \to \infty} \varrho_\ord (M, P(n, x)),~ \lim_{n \to \infty} \varrho_\ord (M, Q(n, x))}\]
    for any (possibly all) $x \in [0,1]$?
\end{prob}

A short calculation shows that there exists $x_0 \in [0,1]$ such that $\lim_{n \to \infty} \varrho_\ord (M, Q(n, x)) \le \lim_{n \to \infty} \varrho_\ord (M, Q(n, x))$ iff $x<x_0$. 

\subsection{Colored graphs}\label{sec:intro_color}

A {\em $q$-colored graph} is a graph $G =(V, E)$ together with a coloring function $f: E\to C$ where $|C|=q$. Fix  $q \in \Z^+$ and let $G = (V, E)$ be a $q$-colored complete graph with coloring function $f$. For $1\le i \le q$, let $e_i (G) = | \{ e\in E : f(e) = i \} |$  and let $\varrho_i (G) := e_i (G) / \binom{|V|}{2}$ be the {\em density} of color $i$. Given a $q$-colored complete graph $F$ with $|V(F)|=s$ and coloring function $g$, a subset $X \subset V$ with $|X| = s$ is a {\em copy of $F$ in $G$} if there is a bijection $\sigma : V(F) \to X$ such that $g (uv) = g( \sigma(u) \sigma(v))$ for all distinct $u,v \in V(F)$.  Let $ N_q (F, G )$ be the number of copies of $F$ in $G$ and let $\varrho_q (F, G) := N_q (F, G ) / \binom{|V|}{s}$ be the {\em density of $F$ in $G$}.

Let $(G_n)_{n = 1}^\infty$ be a sequence of $q$-colored complete graphs with $|V(G_n)|\to \infty$. The sequence $(G_n)_{n = 1}^\infty$ is {\em $F$-good} if $x_i = \lim_{n \to \infty} \varrho_i (G_n)$ exists for all $i \in [q]$ and $y = \lim_{n\to \infty} \varrho_q (F, G_n)$ exists. In this case, $(G_n)_{n = 1}^\infty$ {\em realizes} $(x_1, \ldots, x_{q-1}, y)$. Note that we only list $x_1, \ldots, x_{q-1}$ since $x_q = 1 - (x_1+ \cdots + x_{q-1})$. Define \[ I_q (F, (x_1, \ldots, x_{q-1})) := \sup \{ y : (x_1, \ldots, x_{q-1}, y) \in [0,1]^q \text{ is realized by some $F$-good } (G_n)_{n = 1}^\infty \}. \]

 For $2 \le s \le t$, let $K_{s,t}'$ be the $3$-colored clique on vertex set $V = V_1 \sqcup V_2$ with $|V_1| = s$ and $|V_2| = t$ with coloring function $f$ defined by \[f(ij) := \begin{cases}
    \text{blue} & \text{if } i,j \in V_1, \\
    \text{green} & \text{if } i,j \in V_2, \\
    \text{red} & \text{otherwise},
\end{cases}\] for all distinct $i,j \in [s+t]$ (see Figure \ref{fig:K34}).

\begin{center}
    \begin{figure}[h]
        \centering
        \includegraphics[scale = .75]{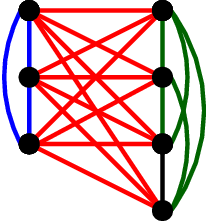}
        \caption{$K_{3,4}'$. }
        \label{fig:K34}
    \end{figure}
\end{center}

\begin{thm}\label{thm:color}
    Let $2 \le s \le t$ and $x_b, x_g, x_r \in [0,1]$ such that $x_b + x_g +x_r = 1$. When $\sqrt{x_b} + \sqrt{x_g} \le 1$, 
    \[I_3 (K_{s,t}', (x_b, x_g)) = x_b^{s/2} x_g^{t/2} \binom{s+t}{s}.
    \] When  $\sqrt{x_b} + \sqrt{x_g} > 1$, \[I_3 (K_{s,s}', (x_b, x_g)) = \paren{\frac{x_r}{2}}^s \binom{2s}{s}.\]
\end{thm}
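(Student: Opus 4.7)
The plan is to establish the lower and upper bounds separately in each of the two cases.

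For the lower bound in case~1, we take a partition $V(G_n) = A_n \sqcup B_n \sqcup C_n$ with $|A_n|/n \to \sqrt{x_b}$, $|B_n|/n \to \sqrt{x_g}$, and $|C_n|/n \to 1 - \sqrt{x_b} - \sqrt{x_g}$, and color all edges inside $A_n$ blue, inside $B_n$ green, and all remaining edges red. Easy counts verify that the color densities converge to $(x_b, x_g, x_r)$ and that every copy of $K_{s,t}'$ must consist of an $s$-subset of $A_n$ and a $t$-subset of $B_n$ (otherwise either a required red edge between the parts, or a required monochromatic edge within a part, fails), producing $\binom{|A_n|}{s}\binom{|B_n|}{t}$ copies. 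Dividing by $\binom{n}{s+t}$ yields the limit $x_b^{s/2} x_g^{t/2} \binom{s+t}{s}$. For the matching upper bound we use
\[
N_q(K_{s,t}', G) \le N(K_s, G_{\text{blue}}) \cdot N(K_t, G_{\text{green}}),
\]
where $G_{\text{blue}}$ and $G_{\text{green}}$ are the blue and green subgraphs of $G$; this inequality simply drops the requirement that all edges between the two parts be red. By Kruskal--Katona applied to these subgraphs (which have edge densities $x_b$ and $x_g$ respectively), each factor is at most $(1+o(1))\, x_b^{s/2} n^s/s!$ and $(1+o(1))\, x_g^{t/2} n^t/t!$; multiplying and dividing by $\binom{n}{s+t}$ gives the desired bound.

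For the lower bound in case~2 ($s = t$ with $\sqrt{x_b} + \sqrt{x_g} > 1$), note that the hypothesis forces $x_r < 1/2$, so we may pick $\alpha \in (0, 1/2]$ with $\alpha(1-\alpha) = x_r/2$. We then partition $V(G_n)$ into $A_n \sqcup B_n$ with $|A_n|/n \to \alpha$ and $|B_n|/n \to 1-\alpha$, color all $A$-$B$ edges red, and distribute blue and green edges inside each part so as to realize the required densities $(x_b, x_g)$ while maintaining the dominant $\binom{|A_n|}{s}\binom{|B_n|}{s}$-term of $K_{s,s}'$-copies (those with $X_1\subset A_n$ and $X_2\subset B_n$, or vice versa). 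Taking $n \to \infty$ yields the density $\alpha^s(1-\alpha)^s \binom{2s}{s} = (x_r/2)^s \binom{2s}{s}$; the main subtlety here is to arrange the internal blue/green distribution so that one does not lose any of the dominant copies for all feasible $(x_b, x_g)$ in the case-2 region.

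The upper bound in case~2 is the main obstacle. The product of Kruskal--Katona bounds that worked in case~1 gives only $x_b^{s/2} x_g^{s/2} \binom{2s}{s}$, which is strictly larger than $(x_r/2)^s \binom{2s}{s}$ in the regime $\sqrt{x_b} + \sqrt{x_g} > 1$; and the alternative of bounding $N_q(K_{s,s}', G)$ by the number of red-$K_{s,s}$ subgraphs is also too weak, because the extremal red graph for the latter is a clique (giving $x_r^s$), whereas the extremal three-colored graphon here has a bipartite red structure (giving $(x_r/2)^s$). Our plan is therefore to work directly with the integral
\[
t(K_{s,s}', W) = \int \prod_{i<j} W_b(y_i, y_j) \prod_{i<j} W_g(z_i, z_j) \prod_{i,j} W_r(y_i, z_j)\, dy\, dz
\]
and argue, via a variational extremal-reduction (first passing to $\{0,1\}$-valued $W_b, W_g, W_r$) combined with Cauchy--Schwarz inequalities that exploit the $s = t$ symmetry between blue and green, that the maximum over all $3$-graphons with $\int W_r = x_r$ is attained on a bipartite-type red graphon, for which a direct calculation yields $(x_r/2)^s$. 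The hardest step will be the reduction showing that the extremal $W_r$ must be bipartite, which requires simultaneously controlling the blue and green clique contributions on opposite sides of the red structure.
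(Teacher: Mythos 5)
Your Case 1 argument (the $A_n\sqcup B_n$ construction with everything else red, and the upper bound $N_3(K_{s,t}',G)\le N_3(K_s',G)N_3(K_t',G)$ followed by Kruskal--Katona applied to the blue and green subgraphs) is exactly the paper's proof and is fine. The problems are in Case 2, where you have two genuine gaps. First, the upper bound: you leave it as a program (a variational/graphon reduction you yourself call the main obstacle), and you dismiss the route that actually works. Your objection that bounding by ``red $K_{s,s}$ subgraphs'' is too weak because the extremal graph for subgraph copies of $K_{s,s}$ is a clique misses the key point: in a copy of $K_{s,s}'$ the edges inside each side are blue or green, hence \emph{non-red}, so the $2s$ vertices span an \emph{induced} copy of $K_{s,s}$ in the red graph. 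Applying the Liu--Mubayi--Reiher bound on induced densities, $I_{\ind}(K_{s,s},x_r)\le\binom{2s}{s}x_r^s/2^s$ (Theorem 4.2 in the paper), immediately gives the upper bound $(x_r/2)^s\binom{2s}{s}$; a red clique is irrelevant since it contains no induced $K_{s,s}$. No direct analysis of the three-colored integral is needed.

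Second, your Case 2 lower-bound construction does not work in the interior of the region, and the ``subtlety'' you flag is in fact an obstruction. With $V=A_n\sqcup B_n$, $|A_n|\sim\alpha n$, $|B_n|\sim(1-\alpha)n$ and all cross edges red, every copy of $K_{s,s}'$ is a blue $K_s$ inside one part together with a green $K_s$ inside the other, so the count is $N_b(A)N_g(B)+N_b(B)N_g(A)$; since no $s$-set is simultaneously a blue and a green clique, this is at most $\binom{|A_n|}{s}\binom{|B_n|}{s}$, and being within a $(1-o(1))$ factor of it forces (up to swapping colors) almost every $s$-subset of $A_n$ to be a blue clique and almost every $s$-subset of $B_n$ to be a green clique. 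That forces the non-blue edges inside $A_n$ and non-green edges inside $B_n$ to have density $o(1)$, so the blue and green densities converge to $\alpha^2$ and $(1-\alpha)^2$; together with $2\alpha(1-\alpha)=x_r$ this gives $\alpha=\sqrt{x_b}$, $1-\alpha=\sqrt{x_g}$, i.e.\ $\sqrt{x_b}+\sqrt{x_g}=1$, which only happens on the boundary of Case 2. The correct construction (the paper's) abandons the exact bipartition and the symmetry: take disjoint $A_n,B_n\subseteq[n]$ with $|A_n|=\lfloor n\sqrt{x_b}\rfloor$ and $|B_n|=\lfloor n x_r/(2\sqrt{x_b})\rfloor$ (the hypothesis $\sqrt{x_b}+\sqrt{x_g}>1$ guarantees $|A_n|+|B_n|\le n$), color all edges inside $A_n$ blue, all $A_n$--$B_n$ edges red, and \emph{all remaining} edges green; then the color densities tend to $(x_b,x_r,x_g)$ and the copies are exactly the pairs of an $s$-subset of $A_n$ with an $s$-subset of $B_n$, giving density $(x_r/2)^s\binom{2s}{s}$.
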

Theorem~\ref{thm:color} determines $I_3 (K_{s,s}', (x_b, x_g))$ for all vectors $(x_b, x_g)$ in the region $x_b, x_g \ge 0$ and $x_b+x_g \le 1$. Write $\ind (K_{s,s}')$ for the {\em inducibility of $K_{s,s}'$} which is
the maximum value of $I_3 (K_{s,s}', (x_b, x_g))$. An easy optimization shows that
$$\ind (K_{s,s}') = I_3 (K_{s,s}', (1/4, 1/4)) = \paren{\frac{1}{4}}^{s} \binom{2s}{s}.$$ 
Note that $I_3 (K_{s,t}', (x_b, x_g))$ is not known when $\sqrt{x_b} + \sqrt{x_g} > 1$ and $s \neq t$.

\section{Proof of Theorem \ref{thm:RW}}\label{sec:stars}
 Given a function $F: [0,1] \to \mathbb R$, a graphon $W$, and $\gamma \in [0,1]$, let \begin{align*}
    D (F, W) &:= \int_0^1 F (d_W (x)) ~dx, \\
    \operatorname{MAX} (\gamma, F) &:= \max \{ (1 - \sqrt{\gamma}) F (0) + \sqrt{\gamma} F (\sqrt{\gamma}),~ (1 - \eta) F(\eta) + \eta F(1) \}
\end{align*} where $\eta = 1 - \sqrt{1 - \gamma}$.
A measurable function $F: [0,1] \to \R$ is {\em good} for $W$ if \[ 
D(F, W) \le \operatorname{MAX} (\gamma, F)\] where $\gamma = t (|, W)$ and $\eta = 1 - \sqrt{1 - \gamma}$; $F$ is {\em bad} for $W$ if it is not good for $W$.
A collection of measurable functions is good (bad) for $W$ if all its members are good (bad) for $W$.
In \cite{RW}, a set $\C$ of twice differentiable convex functions $F: [0,1] \to \R$ satisfying certain conditions is defined. We do not need the details of the conditions here, so we do not state them, but we note that $\C$ contains the function $F(x)=x^k$.

The error in \cite{RW} appears in the proof of the following proposition.

\begin{prop}[corresponds to {\cite[Proposition 3.7]{RW}}]\label{prop:step}
	$\C$ is good for all step graphons.
\end{prop}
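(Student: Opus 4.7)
The plan is to treat a step graphon as a weighted finite graph, reduce to an extremal structure in two stages, and then compute directly. Suppose $W$ has constant value $w_{ij}\in[0,1]$ on $J_i\times J_j$ where $[0,1]=J_1\sqcup\cdots\sqcup J_n$ with $|J_i|=\alpha_i$. Then $d_W(x)=\beta_i:=\sum_j\alpha_j w_{ij}$ for $x\in J_i$, and
\[
D(F,W)=\sum_{i=1}^n\alpha_i F(\beta_i),\qquad \gamma=\sum_{i,j}\alpha_i\alpha_j w_{ij}.
\]

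The first stage is a convexity-based reduction to $\{0,1\}$-valued entries. Fixing the sizes $\alpha_i$, the map $(w_{ij})\mapsto D(F,W)$ is convex on the compact polytope $\{(w_{ij}):w_{ij}\in[0,1],\ \sum_{i,j}\alpha_i\alpha_j w_{ij}=\gamma\}$, since $F$ is convex and each $\beta_i$ is an affine function of the $w_{ij}$. Hence the maximum is attained at a vertex, at which at most one entry is non-integer. That lone fractional entry $w_{ij}=t$ can be removed by splitting $J_i$ into two subparts of sizes $t\alpha_i$ and $(1-t)\alpha_i$ and assigning the $(i,j)$-block the values $1$ and $0$ respectively on the two subparts; the degrees in $J_j$ are preserved and Jensen applied to the convex $F$ on $J_i$ shows that $D(F,W)$ does not decrease, while $\gamma$ is unchanged. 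So it suffices to prove the bound when every $w_{ij}\in\{0,1\}$, i.e., when $W$ encodes a weighted graph on the parts $J_i$.

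The second stage is a compression argument: using pairwise edge-swaps between parts that preserve $\gamma$ and monotonically increase $D(F,W)$, one drives $W$ toward one of the two extremal forms---either $W$ is supported on a square $[0,\sqrt\gamma]^2$ (a ``clique plus isolated vertices'') or on $[0,1]^2\setminus[0,1-\eta]^2$ (the complement, with $\eta=1-\sqrt{1-\gamma}$). A direct evaluation gives $D(F,W)=(1-\sqrt\gamma)F(0)+\sqrt\gamma F(\sqrt\gamma)$ in the first case and $(1-\eta)F(\eta)+\eta F(1)$ in the second, yielding $D(F,W)\le\operatorname{MAX}(\gamma,F)$.

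The main obstacle, and the source of the error in \cite{RW}, is the compression step: an individual edge-swap need not be monotone in $D(F,W)$, so one must either group swaps into larger compound moves or perform a careful case analysis on the local configurations. This is where the full defining conditions of $\C$ (twice differentiability together with the remaining structural hypotheses on members of $\C$) must be invoked in place of bare convexity, and it is the technical heart of the correction.
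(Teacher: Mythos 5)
Your proposal is not a proof: the entire difficulty of the proposition is concentrated in your ``second stage,'' which you assert rather than establish. Saying that ``pairwise edge-swaps between parts preserve $\gamma$ and monotonically increase $D(F,W)$'' and then conceding in the next paragraph that an individual swap need not be monotone and that one must ``either group swaps into larger compound moves or perform a careful case analysis'' is exactly the gap: no compound move, case analysis, potential function, or termination argument is given, so the reduction from a $\{0,1\}$-valued step graphon to the two extremal configurations (clique plus isolated vertices, or its complement) is unproved. This is not a routine omission --- it is the same kind of step on which the argument in \cite{RW} broke down, and appealing vaguely to ``the full defining conditions of $\C$'' does not supply it. There is also a smaller flaw in your first stage: at a vertex of the polytope the single fractional coordinate may be a diagonal entry $w_{ii}$, and your splitting trick (split $J_i$ into pieces of measure $t\alpha_i$ and $(1-t)\alpha_i$ and assign the block values $1$ and $0$) does not produce a $\{0,1\}$-valued \emph{step} graphon with the required properties in that case, so even the reduction to integral entries needs repair.

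For comparison, the paper's correction avoids both issues. It never forces the entries into $\{0,1\}$ and never compresses all the way to the extremal structures. Instead it argues by contradiction from a counterexample with the minimum number $k$ of parts, replaces ``bad'' by ``$\delta$-bad'' so that badness has slack that survives limits (this is what makes the compactness step, Claim \ref{claim:T}, work), and among $k$-part $\delta$-bad graphons chooses one maximizing the quadratic potential $T(W)=\sum_i \alpha_i d_W(i)^2$ rather than the number of integral entries (the choice of counter in \cite{RW} that caused the error). A single mass-shifting perturbation then strictly increases $T$ while preserving the density and $\delta$-badness, which yields Claim \ref{claim:main}; the resulting structural information ($0<\beta_{1k}<1$, first row $0$, last column $1$) lets one split $P_k$, apply Jensen, and invoke Lemmas \ref{lem:lambdaW} and \ref{lem:Wlambda} to pass to a graphon built from one with $k-1$ parts, contradicting minimality. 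If you want to salvage your outline, you would need to supply a genuine monotone compression scheme with a termination argument in place of stage two; as written, the proposal reduces the problem to an unproven claim that is essentially as hard as the proposition itself.
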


To  prove this proposition, we introduce the following more refined notion of ``good.'' For any $\delta >0$, say that $F \in \C$ is {\em $\delta$-good} for a graphon $W$ if \[ D(F, W) < \operatorname{MAX} (\gamma, F)   + \delta \] for $\gamma = t (|, W)$ and $\eta = 1 - \sqrt{1 - \gamma}$. Say that $F$ is {\em $\delta$-bad} for $W$ if it is not $\delta$-good for $W$.

We next show that several lemmas from \cite{RW} still apply when we replace ``good'' with ``$\delta$-good.'' The proofs are almost exact copies of those in \cite{RW}, and are given in the Appendix.

\begin{lem}[corresponds to {\cite[Lemma 3.2]{RW}}]\label{lem:1-W}
    If all functions in $\C$ are $\delta$-good for a graphon $W$, then the same is true for the graphon $1 - W$.
\end{lem}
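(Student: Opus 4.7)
The plan is to adapt the Reiher--Wagner proof of their Lemma 3.2 essentially verbatim, using the involution $F \mapsto F^{*}$ defined by $F^{*}(x) := F(1-x)$ on $[0,1]$. The key structural fact, which the paper borrows from \cite{RW}, is that $\C$ is closed under this involution: if $F \in \C$ then $F^{*} \in \C$. This is immediate from the conditions defining $\C$ (convexity, twice differentiability, and symmetry of the constraints under $x \leftrightarrow 1-x$), and I would invoke it without reproving it.

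The heart of the argument is a pair of identities that say the quantities in the definition of ``$\delta$-good'' transform compatibly under the substitutions $W \leftrightarrow 1-W$ and $F \leftrightarrow F^{*}$. First, since $d_{1-W}(x) = 1 - d_{W}(x)$ for a.e.\ $x$, a change of variable under the integral gives
\[
D(F, 1-W) \;=\; \int_0^1 F\bigl(1 - d_W(x)\bigr)\,dx \;=\; \int_0^1 F^{*}\bigl(d_W(x)\bigr)\,dx \;=\; D(F^{*}, W).
\]
Second, setting $\gamma = t(|, W)$, so that $t(|, 1-W) = 1-\gamma$, and setting $\eta = 1-\sqrt{1-\gamma}$, $\eta' = 1-\sqrt{\gamma}$, a routine check verifies
\[
\operatorname{MAX}(1-\gamma, F) \;=\; \operatorname{MAX}(\gamma, F^{*});
\]
indeed, under $F \mapsto F^{*}$ the ``clique'' branch of one $\operatorname{MAX}$ becomes the ``co-clique'' branch of the other, and vice versa.

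With these two identities in hand, the proof is a one-liner. Let $F \in \C$; I want to show $F$ is $\delta$-good for $1-W$. By closure of $\C$ under the involution, $F^{*} \in \C$, and by hypothesis $F^{*}$ is $\delta$-good for $W$, so
\[
D(F^{*}, W) \;<\; \operatorname{MAX}(\gamma, F^{*}) + \delta.
\]
Applying the two identities above converts this into $D(F, 1-W) < \operatorname{MAX}(1-\gamma, F) + \delta$, which is exactly $\delta$-goodness of $F$ for $1-W$. Since $F \in \C$ was arbitrary, $\C$ is $\delta$-good for $1-W$.

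The only ``obstacle'' is bookkeeping: one must be careful with the roles of $\eta$ and $\eta'$ when verifying the identity $\operatorname{MAX}(1-\gamma, F) = \operatorname{MAX}(\gamma, F^{*})$, because the two branches swap under the involution rather than each being invariant. Once that algebraic check is done, and once the closure property of $\C$ is cited, the result follows with the $+\delta$ slack carried through the chain of (in)equalities unchanged --- which is precisely why replacing ``good'' by ``$\delta$-good'' costs nothing in this lemma.
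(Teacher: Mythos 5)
Your proposal is correct and follows essentially the same route as the paper's proof: the paper also defines $G(x)=F(1-x)$, cites \cite[Lemma 3.1(ii)]{RW} for closure of $\C$ under this involution, and uses the identities $D(F,1-W)=D(G,W)$ and the branch-swapping equality of the $\operatorname{MAX}$ terms, carrying the $+\delta$ slack through unchanged.
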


Given a graphon $W$ and a real number $\lambda \in [0,1]$, let $[\lambda, W]$ be the graphon satisfying \[ [\lambda, W] (x,y) = \begin{cases}
    0 & \text{ if $0 \le x < \lambda$ or $0 \le y < \lambda$,} \\
    W \paren{\frac{x - \lambda}{1 - \lambda}, ~ \frac{y - \lambda}{1 - \lambda}} & \text{otherwise}.
\end{cases} \]

\begin{lem}[corresponds to {\cite[Lemma 3.5]{RW}}]\label{lem:lambdaW}
    If $\lambda \in [0,1]$ and the graphon $W$ has the property that all functions in $\C$ are $\delta$-good for it, then the same applies to $[\lambda, W]$.
\end{lem}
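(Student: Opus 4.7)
My plan is to reduce the claim about $[\lambda, W]$ to the $\delta$-good hypothesis on $W$ via a rescaling of the test function, and then to verify a purely deterministic two-case inequality that absorbs the resulting $\delta$-slack.

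A direct calculation shows that $d_{[\lambda, W]}$ vanishes on $[0, \lambda]$ and equals $(1-\lambda)\, d_W\bigl((x-\lambda)/(1-\lambda)\bigr)$ on $[\lambda, 1]$. Changing variables in $D(F, [\lambda, W])$ then yields the key identity
\[
D(F, [\lambda, W]) = \lambda F(0) + (1-\lambda)\, D(\tilde F, W), \qquad \tilde F(t) := F\bigl((1-\lambda) t\bigr),
\]
together with $\gamma' := t(|, [\lambda, W]) = (1-\lambda)^2 \gamma$. Using the fact that $\C$ is closed under precomposition with the contraction $t \mapsto (1-\lambda) t$ (one of the defining properties of $\C$ in \cite{RW}), we have $\tilde F \in \C$, so applying the hypothesis to $\tilde F$ and $W$ gives $D(\tilde F, W) < \operatorname{MAX}(\gamma, \tilde F) + \delta$. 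Consequently
\[
D(F, [\lambda, W]) < \lambda F(0) + (1-\lambda) \operatorname{MAX}(\gamma, \tilde F) + (1-\lambda)\delta,
\]
and since $(1-\lambda)\delta \le \delta$, it suffices to prove the deterministic inequality
\[
\lambda F(0) + (1-\lambda)\operatorname{MAX}(\gamma, \tilde F) \le \operatorname{MAX}(\gamma', F).
\]

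I would prove this final inequality by splitting on which of the two quantities defining $\operatorname{MAX}(\gamma, \tilde F)$ is larger. In the clique subcase, where the max equals $(1-\sqrt\gamma) F(0) + \sqrt\gamma F((1-\lambda)\sqrt\gamma)$, direct algebraic expansion using $\sqrt{\gamma'} = (1-\lambda)\sqrt\gamma$ collapses the left-hand side exactly to $(1-\sqrt{\gamma'}) F(0) + \sqrt{\gamma'} F(\sqrt{\gamma'})$, which is one of the two terms of $\operatorname{MAX}(\gamma', F)$. In the anti-clique subcase the left-hand side equals $\lambda F(0) + (1-\lambda)[(1-\eta) F((1-\lambda)\eta) + \eta F(1-\lambda)]$, and must be bounded by $(1-\eta') F(\eta') + \eta' F(1)$ with $\eta' = 1 - \sqrt{1 - \gamma'}$; this amounts to bounding a three-point distribution on $\{0, (1-\lambda)\eta, 1-\lambda\}$ of mean $\gamma'$ against the corresponding two-point extremal.

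The main obstacle lies precisely in this anti-clique subcase: the coarse convexity bounds $F((1-\lambda)\eta) \le (1-\lambda) F(\eta) + \lambda F(0)$ and $F(1-\lambda) \le (1-\lambda) F(1) + \lambda F(0)$ alone do not produce a distribution majorized by $(1-\eta', \eta')$ in the convex order, so a refined argument using the full structural properties of $\C$ from \cite{RW} is required. This step is essentially identical to the corresponding step in \cite{RW}; the only new bookkeeping needed here is to propagate the $\delta$-slack, which is straightforward since all $\delta$-related terms appear with nonnegative coefficients summing to at most $\delta$ on the right-hand side.
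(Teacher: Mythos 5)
Your proposal is correct and takes essentially the same route as the paper's proof: the identity $D(F,[\lambda,W])=\lambda F(0)+(1-\lambda)D(\tilde F,W)$ with $\tilde F(t)=F((1-\lambda)t)\in\C$ (closure under contraction, \cite[Lemma 3.1(iii)]{RW}), the exact collapse of the clique subcase via $\sqrt{\gamma'}=(1-\lambda)\sqrt{\gamma}$, and the slack bookkeeping $(1-\lambda)\delta\le\delta$ all coincide with the paper's argument. The refined three-point comparison you flag in the anti-clique subcase is precisely what the paper disposes of by invoking \cite[Lemma 3.3]{RW} with $x=\lambda$, $y=(1-\lambda)\eta$, $z=(1-\lambda)(1-\eta)$ (so that $y^2+2yz=\gamma'$), which bounds $xF(0)+zF(y)+yF(y+z)$ by $\operatorname{MAX}(\gamma',F)$; note that only this max is needed, not the anti-clique term of $\operatorname{MAX}(\gamma',F)$ alone as your phrasing suggests.
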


Similarly, let $[W, \lambda]$ be the graphon satisfying \[ [W, \lambda] (x,y) = \begin{cases}
    W \paren{\frac{x}{1 - \lambda}, ~ \frac{y}{1 - \lambda}} & \text{ if $0 \le x \le 1 - \lambda$ and $0 \le y \le 1 - \lambda$,} \\
    1 & \text{otherwise}.
\end{cases} \]

The next lemma follows from the previous two lemmas and the observation that $[W, \lambda]$ is isomorphic to $1-[\lambda, 1-W]$.

\begin{lem}[corresponds to {\cite[Lemma 3.6]{RW}}]\label{lem:Wlambda}
    If all functions in $\C$ are $\delta$-good for the graphon $W$ and $\lambda \in [0, 1]$, then all functions in $\C$ are good for $[W, \lambda]$ as well.
\end{lem}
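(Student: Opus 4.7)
The plan is to follow the hint appearing just after the statement: show that $[W,\lambda]$ is (weakly) isomorphic to $1-[\lambda,1-W]$, and then iterate Lemmas~\ref{lem:1-W} and~\ref{lem:lambdaW} three times.

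Starting from a graphon $W$ for which every $F\in\C$ is $\delta$-good, I would first invoke Lemma~\ref{lem:1-W} to conclude that every $F\in\C$ is $\delta$-good for $1-W$. Next, applying Lemma~\ref{lem:lambdaW} to the graphon $1-W$ with parameter $\lambda$ yields that every $F\in\C$ is $\delta$-good for $[\lambda,1-W]$. A final application of Lemma~\ref{lem:1-W} to $[\lambda,1-W]$ then gives that every $F\in\C$ is $\delta$-good for $1-[\lambda,1-W]$. The conclusion of the lemma for $[W,\lambda]$ follows as soon as we know that $[W,\lambda]$ and $1-[\lambda,1-W]$ are isomorphic in a way that preserves both $t(|,\cdot)$ and $D(F,\cdot)$.

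To verify this, unfold definitions. By construction, $1-[\lambda,1-W](x,y)$ equals $1$ whenever $x<\lambda$ or $y<\lambda$, and equals $W\!\left(\frac{x-\lambda}{1-\lambda},\frac{y-\lambda}{1-\lambda}\right)$ on $[\lambda,1]^2$. On the other hand, $[W,\lambda](x,y)$ equals $W\!\left(\frac{x}{1-\lambda},\frac{y}{1-\lambda}\right)$ on $[0,1-\lambda]^2$ and $1$ outside this square. The measure-preserving bijection $\phi:[0,1]\to[0,1]$ sending $[0,1-\lambda]$ onto $[\lambda,1]$ via $x\mapsto x+\lambda$ and $[1-\lambda,1]$ onto $[0,\lambda]$ via $x\mapsto x-(1-\lambda)$ intertwines the two: the substitution $x\mapsto\phi(x)=x+\lambda$ turns the scaling $\frac{x}{1-\lambda}$ in $[W,\lambda]$ into $\frac{\phi(x)-\lambda}{1-\lambda}$, matching $1-[\lambda,1-W]$ exactly on the relevant region, while the "filler" value $1$ is transported to $1$. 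Since the degree function $d_W$, and hence $t(|,W)=\int d_W$ and $D(F,W)=\int F(d_W)$, depend only on the orbit of $W$ under measure-preserving transformations, $\delta$-goodness transfers from $1-[\lambda,1-W]$ to $[W,\lambda]$, completing the proof.

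The argument is essentially bookkeeping, so I do not expect a serious obstacle; the only subtlety is confirming that the cyclic relabeling is genuinely measure-preserving and that the affine change of variable inside the scaled block lines up correctly on both sides. Provided one is careful with this computation, the three lemma applications fit together cleanly.
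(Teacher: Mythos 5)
Your proof is correct and follows exactly the route the paper takes: the paper derives this lemma from Lemmas~\ref{lem:1-W} and~\ref{lem:lambdaW} together with the observation that $[W,\lambda]$ is isomorphic to $1-[\lambda,1-W]$. Your explicit verification of the measure-preserving relabeling is just a more detailed write-up of that same observation, so there is nothing to change.
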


We now prove Proposition \ref{prop:step}.
Let $\mathcal G$ be the collection of all step graphons and let $\mathcal G_i$ be the collection of all step graphons with $i$ parts.

\begin{proof}[Proof of Proposition \ref{prop:step}]
    Suppose for a  contradiction that there exists  $W' \in \mathcal G$ and  $F' \in \C$ such that $F'$  is bad for $W'$. This means that $D(F', W') >\operatorname{MAX}(\gamma, W')$. Then there exists $\delta > 0$ such that $D(F', W') \ge \operatorname{MAX}(F', W')+\delta$. In other words, $F'$ is $\delta$-bad for $W'$. Let \[ \mathcal{S} = \mathcal{S}(\delta) := \{ (F,W)  \in \C \times \mathcal{G}: \text{$F$ is $\delta$-bad for }  W \}. \] Note that $(F', W') \in \mathcal{S}$, so $\mathcal{S}\ne \emptyset$ . Partition $\mathcal{S}$ into $\bigcup_{i=1}^\infty \mathcal{S}_i$ where \[ \mathcal{S}_i := \{(F,W)\in \C \times \mathcal {G}_i :\text{$F$ is $\delta$-bad for $W$}\}. \] Let $k$ be the smallest integer such that $\mathcal{S}_k \ne \emptyset$. From \cite[Observation 2.1]{RW} we have $k \ge 2$. Pick $F \in \C$ such that $(F,W) \in \mathcal{S}_k$ for some $W$. Let \[ \mathcal{W} := \{ W : (F, W) \in \mathcal{S}_k \}. \] Let $\mu$ be the Lebesgue measure on $\R$. Each $W \in \mathcal{W}$ is a step function with respect to a partition $\mathcal{P} = \{ P_1, \ldots, P_k \}$ of the unit interval with $\alpha_i = \mu (P_i)$ for all $i \in [k]$ and $\beta_{ij}$ the value attained by $W$ on $P_i \times P_j$ for $i,j \in [k]$. By the choice of $k$, we deduce $\alpha_1, \ldots, \alpha_k >0$ for all $W \in \mathcal{W}$. Define \[ T(W) := \sum_{i = 1}^k \alpha_i (d_W (i))^2 \] where \[ d_W (i) = \int_0^1 W(x,y) ~dy = \sum_{j = 1}^k \alpha_j \beta_{ij} \] for all $i \in [k]$. 
  
    \begin{claim}\label{claim:T}
        $T := \sup_{W \in \mathcal{W}} T(W) = \max_{W \in \mathcal{W}} T(W)$.
    \end{claim}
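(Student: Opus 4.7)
The plan is to prove the claim by a straightforward compactness argument on the parameter space of step graphons with $k$ parts. Each $W \in \mathcal{W}$ is determined (up to measure-preserving rearrangement) by the tuple $(\alpha_1, \ldots, \alpha_k, (\beta_{ij})_{1 \le i \le j \le k})$, and this tuple lies in the compact set
\[K := \Bigl\{(\alpha, \beta) : \alpha_i \in [0,1],~\textstyle\sum_i \alpha_i = 1,~\beta_{ij}\in[0,1]\Bigr\} \subset \R^{k+\binom{k+1}{2}}.\]
The function $T(W)$ is a polynomial in these parameters, hence continuous on $K$, so $T$ attains its supremum over the closed set $\overline{\{(\alpha(W),\beta(W)) : W \in \mathcal{W}\}} \subset K$. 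Thus pick a sequence $W_n \in \mathcal{W}$ with $T(W_n) \to T$ and, by compactness, pass to a subsequence whose parameters $(\alpha^{(n)}, \beta^{(n)})$ converge to some $(\alpha^*, \beta^*) \in K$. Let $W^*$ denote the corresponding (step) graphon.

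Next I would verify that $W^* \in \mathcal{W}$. First, $T(W^*) = \lim T(W_n) = T$ by continuity. To show $W^*$ is $\delta$-bad for $F$, observe that $t(|, W^*) = \sum_{i,j}\alpha_i^*\alpha_j^*\beta_{ij}^*$ and $D(F, W^*) = \sum_i \alpha_i^* F(d_{W^*}(i))$ are both continuous functions of the parameters (since $F$ is continuous on $[0,1]$, being an element of $\C$). Writing $\gamma^{(n)} = t(|, W_n)$ and $\gamma^* = t(|, W^*)$, we also have $\operatorname{MAX}(\gamma, F)$ continuous in $\gamma$ (the map $\gamma \mapsto \eta = 1-\sqrt{1-\gamma}$ is continuous on $[0,1]$ and both terms in the max are continuous compositions). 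The $\delta$-bad condition
\[D(F, W_n) \ge \operatorname{MAX}(\gamma^{(n)}, F) + \delta\]
is closed, so passing to the limit gives $D(F, W^*) \ge \operatorname{MAX}(\gamma^*, F) + \delta$, i.e., $W^*$ is $\delta$-bad for $F$.

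The only remaining subtlety is whether $W^*$ genuinely has $k$ parts, i.e., whether all $\alpha_i^* > 0$. If some $\alpha_i^* = 0$, then $W^*$ is a step graphon on strictly fewer than $k$ nontrivial parts that is $\delta$-bad for $F \in \C$, which means $(F, W^*) \in \mathcal{S}_j$ for some $j < k$. This contradicts the minimality of $k$ (the smallest integer with $\mathcal{S}_k \ne \emptyset$). Therefore $\alpha_i^* > 0$ for all $i$, so $W^* \in \mathcal{W}$ and the supremum is attained at $W^*$. The main obstacle is precisely this last point---ensuring the limit does not escape to a lower-complexity step graphon---but the minimality of $k$ in the outer argument is exactly what is needed to rule it out.
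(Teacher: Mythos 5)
Your proposal is correct and takes essentially the same route as the paper: extract a $T$-maximizing sequence, apply Bolzano--Weierstrass to the finitely many parameters $(\alpha_i,\beta_{ij})$, and use continuity of $T$, $D(F,\cdot)$, $t(|,\cdot)$ and $\operatorname{MAX}(\cdot,F)$ together with the closedness of the inequality $D(F,W)\ge \operatorname{MAX}(\gamma,F)+\delta$ to conclude the limit graphon lies in $\mathcal{W}$. Your explicit use of the minimality of $k$ to rule out a limiting part of measure zero is a point the paper glosses over, but it is the same underlying argument rather than a different approach.
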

    
    \begin{proof}
        By the choice of $k$ and $F$, there is a sequence of  step graphons $(W_n)_{n = 1}^\infty$, each with $k$ parts, such that $\lim_{n \to \infty} T (W_n) = T$ and $F$ is $\delta$-bad for $W_n$ for all $n$. Notice that each $W_n$ is defined by the measure of its parts $\alpha_{n,i} \in (0,1)$ for all $i \in [k]$ and its entries $\beta_{n,ij} \in [0,1]$ for all $i,j \in [k]$ (where $\sum \alpha_{n,i} = 1$ and $\beta_{n,ij} = \beta_{n,ji}$). Thus, each $k$-step graphon is defined by $k + \binom{k+1}{2}$ parameters, which we can view as variables. For each of these variables, we use the Bolzano-Weierstrass Theorem to recursively take subsequences of $(W_n)_{n = 1}^\infty$ such that the given variable converges. Call the resulting sequence $(W_n')_{n = 1}^\infty$.
        
        For each $i,j$, let $\alpha_i':=\lim_{n \to \infty} \alpha_{n,i}$ and $\beta_{ij}':=\lim_{n \to \infty}\beta_{n, ij}$. Note that we still have $\sum \alpha_i' = 1$ and $\beta_{ij}' = \beta_{ji}'$. Let $W' \in \mathcal G_k$ have partition $\mathcal{P}' = \{ P_1', \ldots, P_k'\}$ such that $\mu (P_i') = \alpha_i'$ for all $i \in [k]$ and $W'(x,y) = \beta_{ij}'$ whenever $x \in P_i'$ and $y \in P_j'$ for all $i,j \in [k]$.      
        Let $\gamma_n := t (|, W_n')$ and $\gamma := t (|, W')$. We will now prove that $T(W')=T$ and $W' \in \mathcal W$.

        First, observe that
        \[\lim_{n \to \infty}  \sum_{j = 1}^k \alpha_{n,j} \beta_{n,ij} =  \sum_{j = 1}^k  \lim_{n \to \infty} \alpha_{n,j} \beta_{n,ij} = \sum_{j = 1}^k \alpha_j' \beta_{ij}'\] and 
        \[\lim_{n \to \infty} \sum_{i = 1}^k \alpha_{n,i} \paren{\sum_{j = 1}^k a_{n,j} \beta_{n, ij}}^2 = \sum_{i = 1}^k \alpha_{i}' \lim_{n \to \infty} \paren{\sum_{j = 1}^k a_{n,j} \beta_{n, ij}}^2 = \sum_{i = 1}^k \alpha_i' \paren{\sum_{j = 1}^k a_j' \beta_{ij}'}^2\] 
        since all sums are finite and the limit of each summand exists. 
        This gives
        \[ T (W')=\sum_{i = 1}^k \alpha_i' \paren{\sum_{j = 1}^k a_j' \beta_{ij}'}^2 =\lim_{n \to \infty} \sum_{i = 1}^k \alpha_{n,i}  \paren{\sum_{j = 1}^k a_{n,j} \beta_{n, ij}}^2 = \lim_{n \to \infty} T(W_n)=T. \]
        We now show that $W' \in \mathcal W$. As every $F \in \C$ is twice differentiable, it is also continuous and so 
        \[\lim_{n \to \infty} F \paren{\sum_{j = 1}^k \alpha_{n,j} \beta_{n,ij}} =
        F \paren{\lim_{n \to \infty} \sum_{j = 1}^k \alpha_{n,j} \beta_{n,ij}} =
        F \paren{\sum_{j = 1}^k \alpha_j' \beta_{ij}'}.\]
        Since $F$ is continuous on the closed interval $[0,1]$, it is bounded, and we further obtain 
        \[\lim_{n \to \infty} \sum_{i = 1}^k  \alpha_{n,i} F \paren{\sum_{j = 1}^k \alpha_{n,j} \beta_{n,ij}} = \sum_{i = 1}^k \alpha_i' F \left(\sum_{j = 1}^k \alpha_j' \beta_{ij}'\right)=D(F, W').\]
        Consequently, 
        \[\lim_{n \to \infty} D (F, W_n') = \lim_{n \to \infty} \int_0^1 F (d_{W_n'} (x)) ~dx 
        = \lim_{n \to \infty} \sum_{i = 1}^k \alpha_{n,i} F \paren{\sum_{j = 1}^k \alpha_{n,j} \beta_{n,ij}} = D (F, W').\]
        Next, observe that 
        \[ \lim_{n \to \infty} \int_{[0,1]^2} W_n' (x,y) ~dx dy = \lim_{n \to \infty} \sum_{i = 1}^k \sum_{j = 1}^k \alpha_{n, i} \alpha_{n,j} \beta_{n, ij} = \sum_{i = 1}^k \sum_{j = 1}^k \alpha_i' \alpha_j' \beta_{ij}' = \int_{[0,1]^2} W' (x,y) ~dx dy \]
        as all sums are finite and $\lim_{n \to \infty} \alpha_{n, i} \alpha_{n,j} \beta_{n, ij}$ exists for all $i,j \in [k]$. Thus,
        \[ \lim_{n \to \infty} \gamma_n = \lim_{n \to \infty} t (|, W_n') = \lim_{n \to \infty} \int_{[0,1]^2} W_n' (x,y) ~dx dy = \int_{[0,1]^2} W' (x,y) ~dx dy = t (|, W') = \gamma.\]
        Further, since $f(x) = \sqrt{x}$ is continuous on $[0,1]$, \begin{align*}
            \lim_{n \to \infty} \operatorname{MAX} (\gamma_n, F) &= \lim_{n \to \infty} \max \{ (1 - \sqrt{\gamma_n}) F (0) + \sqrt{\gamma_n} F (\sqrt{\gamma_n}),~ (1 - \eta_n) F (\eta_n) + \eta_n F(1) \} \\
            &= \max \{ (1 - \sqrt{\gamma}) F (0) + \sqrt{\gamma} F (\sqrt{\gamma}),~ (1 - \eta) F (\eta) + \eta F(1) \} \\
            &= \operatorname{MAX} (\gamma, F)
        \end{align*} for $\eta_n = 1 - \sqrt{1 - \gamma_n}$ and $\eta = 1 - \sqrt{1 - \gamma}$. Finally, as $F$ is $\delta$-bad for $W_n'$, 
        \[D (F, W') =\lim_{n \to \infty} D (F, W_n') \ge \lim_{n \to \infty} \operatorname{MAX} (\gamma_n, F) +\delta =\operatorname{MAX} (\gamma, F) + \delta.\] This means that $F$ is $\delta$-bad for $W'$. Therefore, $W' \in \mathcal W$ and the claim is proved. 
    \end{proof}

    Fix $W \in \mathcal{W}$ with partition $\mathcal{P} = \{P_1, \ldots, P_k\}$ and parameters $\alpha_i$  and $\beta_{ij}\in [0,1]$ for all $i,j\in [k]$ such that $T(W) = T$. Recall that by the minimality of $k$, we know that $\alpha_i \in (0,1)$. By definition of $\mathcal W$, $F$ is $\delta$-bad for $W$. Set $d_i := d_W (i)$.
    To obtain the necessary contradiction to complete the proof of the proposition,  we will show that $F$ is $\delta$-good for $W$, i.e. that 
    \[ D(F, W) = \int_0^1 F (d_W (x)) ~dx = \sum_{i = 1}^k \alpha_i F (d_i) < \operatorname{MAX} (\gamma, F) + \delta, \] 
    for $\gamma =  t (|, W)= \sum_{i = 1}^k \alpha_i d_i$. Without loss of generality, we may assume $d_1 \le d_2 \le \cdots \le d_k$.
    
    \begin{claim}[corresponds to {\cite[Claim 3.8]{RW}}] \label{claim:main}
    	If $1 \le r < s \le k$ and $\beta_{ir} > 0$, then $\beta_{is} = 1$.
    \end{claim}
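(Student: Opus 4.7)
The plan is to argue by contradiction: suppose $\beta_{ir}>0$ and $\beta_{is}<1$ for some $i\in[k]$ and $r<s$. By our ordering $d_1\le\cdots\le d_k$, we have $d_r\le d_s$, and I split into two cases.

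\emph{Case 1: $d_r<d_s$.} The strategy is to build a small perturbation $W^\varepsilon$ that strictly increases $T$ while keeping $F$ $\delta$-bad, contradicting the maximality of $T(W)$ in $\mathcal{W}$. Decrease $\beta_{ir}$ (and symmetrically $\beta_{ri}$) by $\varepsilon>0$ and increase $\beta_{is}$ (and $\beta_{si}$) by an appropriate multiple of $\varepsilon$, choosing the ratio so that $d_i$ and $\gamma$ both remain fixed; this is feasible for small $\varepsilon$ because $\beta_{ir}>0$ and $\beta_{is}<1$. (When $i\in\{r,s\}$ the ratio differs from the $i\notin\{r,s\}$ case by a factor of $2$ to account for the diagonal entries contributing to $\gamma$ only once rather than twice.) A direct computation shows that each $d_j$ is unchanged except that $d_r$ decreases by some $h_r>0$ and $d_s$ increases by some $h_s>0$, with $\alpha_rh_r=\alpha_sh_s$. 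Since $\gamma$ is preserved, $\operatorname{MAX}(\gamma_\varepsilon,F)=\operatorname{MAX}(\gamma,F)$, and
\[ \Delta T=2\alpha_r h_r(d_s-d_r)+O(\varepsilon^2)>0 \]
for $\varepsilon$ small enough.

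The main subtlety in Case 1 is showing $\Delta D\ge 0$, so that $F$ remains $\delta$-bad for $W^\varepsilon$. Introduce $g(t):=\alpha_rF(d_r-th_r)+\alpha_sF(d_s+th_s)$ on $[0,1]$, so $\Delta D=g(1)-g(0)$. Using $\alpha_rh_r=\alpha_sh_s$,
\[ g'(t)=\alpha_r h_r\bigl(F'(d_s+th_s)-F'(d_r-th_r)\bigr)\ge 0 \]
by convexity of $F$ together with $d_s+th_s\ge d_r-th_r$. Hence $\Delta D\ge 0$, $F$ is still $\delta$-bad for $W^\varepsilon$, and $T(W^\varepsilon)>T(W)$ --- a contradiction.

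\emph{Case 2: $d_r=d_s$.} Now $\Delta T=O(\varepsilon^2)$, so the perturbation above does not directly help; instead, I exploit the minimality of $k$. Merge $P_r$ and $P_s$ into a single part of measure $\alpha_r+\alpha_s$, with new step values defined by the weighted averages
\[ \beta'_{(rs)j}:=\frac{\alpha_r\beta_{rj}+\alpha_s\beta_{sj}}{\alpha_r+\alpha_s}\quad (j\ne r,s),\qquad \beta'_{(rs)(rs)}:=\frac{\alpha_r^2\beta_{rr}+2\alpha_r\alpha_s\beta_{rs}+\alpha_s^2\beta_{ss}}{(\alpha_r+\alpha_s)^2}. \]
A routine computation, using $d_r=d_s$, shows the new graphon $W'\in\mathcal{G}_{k-1}$ satisfies $d_{W'}(x)=d_j$ for $x\in P_j$ ($j\ne r,s$) and $d_{W'}(x)=d_r$ for $x$ in the merged part. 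Hence $\gamma'=\gamma$ and $D(F,W')=D(F,W)$, so $F$ is $\delta$-bad for $W'$, contradicting the minimality of $k$.

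The hard part will be Case 1: verifying $\Delta D\ge 0$ requires more than a linear approximation, since the first-order coefficient $F'(d_s)-F'(d_r)$ could vanish whenever $F$ happens to be linear on $[d_r,d_s]$, leaving indeterminate signs from higher-order terms. The $g(t)$ monotonicity trick bypasses this by bounding $\Delta D$ at all orders simultaneously using only the convexity of $F$.
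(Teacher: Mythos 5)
Your proposal is correct, and Case 1 is essentially the paper's argument: your perturbation (decrease $\beta_{ir}$, increase $\beta_{is}$ in the ratio that preserves $\gamma$, hence preserves $\operatorname{MAX}(\gamma,F)$) coincides, up to reparametrizing $\eps$, with the paper's $W_\eps = W+\eps Q$, and your $g(t)$ monotonicity trick is an equivalent use of convexity. In fact the simpler tangent-line bound $\Delta D \ge \alpha_i\alpha_r\alpha_s\eps\,(F'(d_s)-F'(d_r)) \ge 0$ already suffices, because only $\Delta D\ge 0$ is needed to keep $F$ $\delta$-bad; the possible vanishing of $F'(d_s)-F'(d_r)$ is harmless, so the ``hard part'' you flag is not actually an obstacle. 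The genuine difference is your case split, which the paper avoids: computing $\Delta T$ exactly gives $\Delta T = 2\eps\alpha_i\alpha_r\alpha_s(d_s-d_r)+\eps^2\alpha_i^2\alpha_r\alpha_s(\alpha_r+\alpha_s)$, whose quadratic term is strictly positive, so $T$ strictly increases even when $d_r=d_s$ (your own expansion has the same feature, since the second-order contribution is $\alpha_r h_r^2+\alpha_s h_s^2>0$ rather than an error term of unknown sign). Your Case 2 is nonetheless a valid alternative route: merging two parts of equal degree yields a $(k-1)$-part step graphon with the same degree distribution, hence the same $D(F,\cdot)$ and the same $\gamma$, so $F$ stays $\delta$-bad and the minimality of $k$ is contradicted; this buys the extra observation that the extremal $W$ must have pairwise distinct part degrees, whereas the paper's single perturbation handles both situations at once without building a new partition.
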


    \begin{proof}[Proof of Claim \ref{claim:main}]
        Suppose, for contradiction, that $\beta_{ir} > 0$ and $\beta_{is} < 1$.
        Define  $Q \in \mathcal G_k$ with the same partition $\mathcal{P}$ as follows: let $\delta_{ij}$ denote the Kronecker delta, and set, for $x \in P_m$ and $y \in P_n,$
        \[ Q(x,y) = \begin{cases}
            -(1 + \delta_{ir}) \alpha_s & \text{ if } \{m, n\} = \{i, r\}, \\
            (1 + \delta_{is}) \alpha_r & \text{ if } \{m, n\} = \{i, s\}, \\
            0 & \text{ otherwise.}
        \end{cases} \]
        Let $\eps \ge 0$ be maximal such that $W_{\varepsilon} = W + \eps Q$ still satisfies $W_{\varepsilon} (x,y) \in [0,1]$ for all $x,y \in [0,1]$. By our assumptions on $\beta_{ir}$ and $\beta_{is},$  we know that $\eps>0$.

        For all $j \in [k]$, let $d_j'$ denote the value attained by $d_{W_{\varepsilon}} (x)$ for all $x \in P_j$.  We have
        \[d_r' - d_r = -(1 + \delta_{ir}) \alpha_i \alpha_s \eps + \delta_{ir} (1 + \delta_{is}) \alpha_i \alpha_s \eps = - \alpha_i \alpha_s \eps, \] and similarly \[ d_s' - d_s = \alpha_i  \alpha_r \eps. \]
        Further, if $i \not\in\{r,s\}$, then
        \[d_i'-d_i=\alpha_r(\beta_{ir}-\alpha_s)+\alpha_s(\beta_{is}+\alpha_r) - (\alpha_r\beta_{ir} +\alpha_s\beta_{is})=0.\]
        Therefore $d_j' = d_j$  for all $j \notin \{r,s\}$. Consequently,
        \begin{align*}
            T(W_{\varepsilon}) - T(W) =   \sum_{j = 1}^k \alpha_j (d_j')^2 - \sum_{j = 1}^k \alpha_j d_j^2 &= \alpha_r (d_r - \alpha_i \alpha_s \eps)^2 - \alpha_r d_r^2 + \alpha_s (d_s + \alpha_i \alpha_r \eps)^2 - \alpha_s d_s^2 \\
            &= 2 \eps\alpha_i \alpha_r \alpha_s (d_s - d_r) + \eps^2 \alpha_i^2\alpha_r \alpha_s(\alpha_r+\alpha_s) \\
            &\ge \eps^2 \alpha_i^2\alpha_r \alpha_s(\alpha_r+\alpha_s) >0
        \end{align*} since $\alpha_i, \alpha_r, \alpha_s, \eps > 0$ and $d_s \ge d_r$ by assumption. Thus $T(W_{\varepsilon}) > T(W)$, and clearly $W_{\eps} \in \mathcal G_k$, so we conclude by the choice of $W$ that $W_{\eps} \not\in \mathcal W$.
        To complete the proof, we will now obtain the contradiction $W_{\eps} \in \mathcal W$ by showing that $F$ is $\delta$-bad for $W_{\eps}$. 
        First, observe that  \[ \int_{[0,1]^2} Q (x,y) ~dx dy = \alpha_i \alpha_r \alpha_s ((1 + \delta_{is}) (2 - \delta_{is}) - (1 + \delta_{ir}) (2 - \delta_{ir})) = 0. \] This implies that $t (|, W_{\varepsilon}) = t (|, W) = \gamma$. Next,
        \begin{align*}
            D(F, W_\varepsilon) - D(F, W) &= \int_0^1 F (d_{W_{\varepsilon}} (x)) ~ dx  - \int_0^1 F (d_{W} (x)) ~ dx \\
            &= \sum_{j = 1}^k \alpha_j (F (d_j') - F (d_j)) \\
            &= \alpha_s (F (d_s + \alpha_i \alpha_r \eps) - F (d_s)) + \alpha_r (F (d_r - \alpha_i \alpha_s \eps) - F (d_r)) \\
            &\ge \alpha_i \alpha_r \alpha_s \eps (F' (d_s) - F' (d_r)) \ge 0
        \end{align*} by convexity of $F$ and  $d_s \ge d_r$.
        Since $F$ is $\delta$-bad for $W$, we obtain 
        \[D(F, W_\varepsilon) \ge D(F, W) \geq \operatorname{MAX} (\gamma, F) + \delta.\]
        This shows that $F$ is $\delta$-bad for $W_\eps$ and completes the proof of the claim.
    \end{proof}
  
    \begin{claim}[corresponds to {\cite[Claim 3.9]{RW}}] \label{claim:beta0}
        $\beta_{1k} >0$.
    \end{claim}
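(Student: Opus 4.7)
The plan is to argue by contradiction. Suppose $\beta_{1k}=0$; the first step is to combine this with Claim~\ref{claim:main} applied to $i=1$ and $s=k$: if any $\beta_{1r}>0$ for $r<k$, then $\beta_{1k}=1$, a contradiction. Hence $\beta_{1r}=0$ for every $r<k$, and together with $\beta_{1k}=0$ this forces the entire first row of $W$ to vanish, so
\[ d_1=\sum_{j=1}^k \alpha_j \beta_{1j}=0. \]
Equivalently, the part $P_1$ is ``totally isolated'' in $W$.

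The second step is to remove this isolated part to pass to a step graphon with strictly fewer pieces. I would define $W''$ to be the step graphon on $[0,1]$ with $k-1$ parts of measures $\alpha_i/(1-\alpha_1)$ for $2\le i\le k$ and entries $\beta_{ij}$. Since $k\ge 2$ and every $\alpha_j>0$, we have $\alpha_1\in(0,1)$, so $W''$ is a well-defined $(k-1)$-step graphon. Because the first row and column of $W$ are identically zero, $W$ agrees, up to a measure-preserving rearrangement of $[0,1]$ that moves $P_1$ to the initial segment $[0,\alpha_1)$, with $[\alpha_1,W'']$. Since $F$ is $\delta$-bad for $W=[\alpha_1,W'']$, the contrapositive of Lemma~\ref{lem:lambdaW} yields some $F^*\in\C$ that is $\delta$-bad for $W''$. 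Then $(F^*,W'')\in\mathcal{S}_{k-1}$, contradicting the minimality of $k$.

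The only step requiring mild care is the identification of $W$ with $[\alpha_1,W'']$: one must rearrange the parts so that $P_1$ becomes the prefix $[0,\alpha_1)$. Because a measure-preserving bijection of $[0,1]$ leaves $D(F,\cdot)$, $t(|,\cdot)$, and hence $\delta$-goodness invariant, this rearrangement is harmless. Once this is in place, Lemma~\ref{lem:lambdaW} closes the argument immediately, so the real content of the claim lies entirely in Claim~\ref{claim:main} (forcing $d_1=0$) plus the reduction-of-parts trick already available from the tools established in \cite{RW}.
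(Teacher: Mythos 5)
Your proposal is correct and follows essentially the same route as the paper: use Claim~\ref{claim:main} to force the whole first row of $W$ to vanish, identify $W$ (up to measure-preserving rearrangement) with $[\alpha_1,W'']$ for a $(k-1)$-step graphon $W''$, and conclude via Lemma~\ref{lem:lambdaW} together with the minimality of $k$. The only cosmetic difference is that you invoke Lemma~\ref{lem:lambdaW} in contrapositive form (producing a $\delta$-bad pair in $\mathcal{S}_{k-1}$), whereas the paper applies it directly (all of $\C$ is $\delta$-good for $W''$, hence for $W$); these are logically the same argument.
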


    \begin{proof}
        If this does not hold, then $\beta_{1k} = 0$ and the previous claim implies $\beta_{1i} = 0$ for all $i \in [k-1]$. It follows that there exists $W' \in \mathcal G_{k-1}$ such that $W$ is isomorphic to $[\alpha_1, W']$. Due to the minimality of $k$ all functions in $\C$ are $\delta$-good for $W'$ and by Lemma \ref{lem:lambdaW} the same applies to the graphon $W$, contrary to its choice.
    \end{proof}

    \begin{claim}[corresponds to {\cite[Claim 3.10]{RW}}] \label{claim:beta1}
        $\beta_{1k} <1$.
    \end{claim}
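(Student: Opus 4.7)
This claim is the dual counterpart of Claim~\ref{claim:beta0}, and I would prove it by the analogous peeling argument, this time removing part $P_k$ from the top rather than $P_1$ from the bottom, and invoking Lemma~\ref{lem:Wlambda} in place of Lemma~\ref{lem:lambdaW}.

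Suppose for contradiction that $\beta_{1k}=1$. Since $\beta_{k1}=\beta_{1k}=1>0$, I would apply Claim~\ref{claim:main} with $i=k$ and $r=1$: for every $s$ with $1<s\le k$, the claim yields $\beta_{ks}=1$. Together with $\beta_{k1}=1$, this shows that $\beta_{ki}=1$ for all $i\in[k]$; that is, every entry of the $k$-th row of $(\beta_{ij})$ equals $1$.

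Next, relabeling if necessary, I may arrange the partition $\mathcal{P}$ so that $P_k=[1-\alpha_k,1]$. Define $W'\in\mathcal G_{k-1}$ to be the step graphon on $[0,1]$ obtained from $W$ by deleting the part $P_k$ and rescaling the remaining $k-1$ parts to fill $[0,1]$. Because every value of $W$ on $P_k\times[0,1]$ and $[0,1]\times P_k$ equals $1$, the graphon $W$ is isomorphic to $[W',\alpha_k]$.

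By the minimality of $k$, every function in $\C$ (and in particular $F$) is $\delta$-good for $W'$, so Lemma~\ref{lem:Wlambda} implies that $F$ is $\delta$-good for $[W',\alpha_k]=W$. This contradicts $W\in\mathcal W$, completing the proof. The only place requiring care is verifying that the passage $W\cong[W',\alpha_k]$ is legitimate, but this follows immediately from the fact that the $k$-th row of $(\beta_{ij})$ is identically $1$; the rest of the argument is a direct transcription of the proof of Claim~\ref{claim:beta0}.
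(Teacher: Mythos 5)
Your proposal is correct and follows essentially the same route as the paper: assume $\beta_{1k}=1$, apply Claim~\ref{claim:main} with $i=k$, $r=1$ to conclude the entire $k$-th row is $1$, recognize $W$ as $[W',\alpha_k]$ for some $W'\in\mathcal G_{k-1}$, and contradict $W\in\mathcal W$ via the minimality of $k$ and Lemma~\ref{lem:Wlambda}. Your extra remarks spelling out the rescaling that gives $W\cong[W',\alpha_k]$ are fine and only make explicit what the paper leaves implicit.
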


    \begin{proof}
        Suppose for contradiction that $\beta_{1k}=1$. Then $\beta_{k1} = \beta_{1k}>0$, and Claim \ref{claim:main} implies $\beta_{ki} = 1$ for all $1\le i \le k$. So some  $W' \in \mathcal G_{k-1}$ has the property that $[W', \alpha_k]$ is isomorphic to $W$. This implies that all functions in $\C$ are $\delta$-good for $W'$ and then Lemma \ref{lem:Wlambda} implies that all functions in $\C$ are $\delta$-good for $W$, contradiction.
    \end{proof}

    By Claims~\ref{claim:beta0} and \ref{claim:beta1}, we have $0 < \beta_{1k} < 1$. Moreover, by Claim \ref{claim:main},  $\beta_{1i} = 0$ for all $i \in [k-1]$ and $\beta_{jk} = 1$ for all $j$ with $2 \le j \le k$. Divide $P_k$ into two measurable subsets $Q_k$ and $Q_{k + 1}$ satisfying $\mu (Q_k) = (1 - \beta_{1k}) \alpha_k$ and, consequently, $\mu (Q_{k + 1}) = \beta_{1k} \alpha_k$. Set $Q_i = P_i$ for $i \in [k-1]$ and $\mathcal{Q} = \{ Q_1, \ldots, Q_{k + 1} \}$. Let $W'$ be the step graphon with respect to $\mathcal{Q}$ defined as follows: for $x \in Q_i$ and $y \in Q_j$, \[ W' (x,y) = \begin{cases}
        \beta_{ij} & \text{ if $2 \le i \le k$ and $2 \le j \le k$,} \\
        0 & \text{ if $i=1$ and $j \in [k]$ or vice versa,} \\
        1 & \text{ if $i = k+ 1$ or $j = k + 1$.}
    \end{cases} \] By the last two clauses $W'$ is isomorphic to a graphon of the form $[[\alpha_1 / (1 - \beta_{1k} \alpha_k), W''], \beta_{1k} \alpha_k]$ for some graphon $W''$, and by the first clause $W'' \in \mathcal{G}_{k-1}$. So Lemma \ref{lem:lambdaW} and Lemma \ref{lem:Wlambda} show that $F$ is $\delta$-good for $W'$. Since $t (|, W') = t (|, W') = \gamma$, Jensen's inequality implies that
    \[ D (F, W) = \sum_{i = 1}^k \alpha_i F (d_i) \leq \sum_{i = 1}^{k - 1} \alpha_i F (d_i) + \alpha_k ((1 - \beta_{1k}) F (d') + \beta_{1k} F (d'')) = D (F, W') \]
    for some real numbers $d'$ and $d''$ satisfying $(1 - \beta_{1k}) d' + \beta_{1k} d'' = d_k$. Therefore, $D (F, W) \le D (F, W') < \operatorname{MAX} (\gamma, F) + \delta$ so $F$ is $\delta$-good for $W$, a contradiction.
\end{proof}

The proof of Theorem \ref{thm:RW} follows from Proposition \ref{prop:step} exactly as in \cite{RW}.
\medskip

The error in~\cite{RW} is  in the proof of Claim~\ref{claim:main}.
In~\cite{RW}, $T$ is defined as the number of pairs $(i, j) \in [k]^2$ for which $\beta_{ij} \in \{0, 1\}$ and $W\in \mathcal G_k$ is chosen to maximize $T$. Then Claim 3.8 in~\cite{RW} states that $\beta_{ir} > 0$ and $\beta_{is} < 1$ together imply that $W_{\varepsilon}$ is  $0$ or $1$ on at least $T+1$ of the sets $P_i \times P_j$ by construction. However, this is not true if $\beta_{ir} = 1$ and $\beta_{is} = 0$. For example, consider the graphon $W$ defined on the partition $P_1 = [0, 2/5), P_2 = [2/5, 4/5), P_3 = [4/5, 1]$ by \[
W (x,y) = \begin{cases}
    1 & \text{ if } (x,y) \in (P_1 \times P_3) \cup (P_2 \times P_2)\cup (P_2 \times P_3) \\
    0 & \text{ if } (x,y) \in (P_1 \times P_1) \cup (P_1 \times P_2) \cup (P_3 \times P_3)
\end{cases}
\] (see Figure \ref{fig:counterexample}). We see that $d_1 = 1/5$, $d_2 = 3/5$, and $d_3 = 4/5$, so $d_1 \le d_2 \le d_3$ is satisfied. Setting $i=3$, $r=2$, and $s=3$, we have that $\beta_{ir} =1 >0$ and $\beta_{is} =0 < 1$.
However, all entries are already $0$ or $1$ in $W$, so $W_\eps$ cannot equal $0$ or $1$ on more sets than $W$. Note that any step graphon with parts ordered by degree such that $\beta_{ir} =1$ and $\beta_{is} =0$ for some $i$ and $r<s$ is also a counterexample; the other entries need not equal $0$ or $1$ as in this example.

\begin{figure}[h]
    \centering
    \includegraphics[scale=.5]{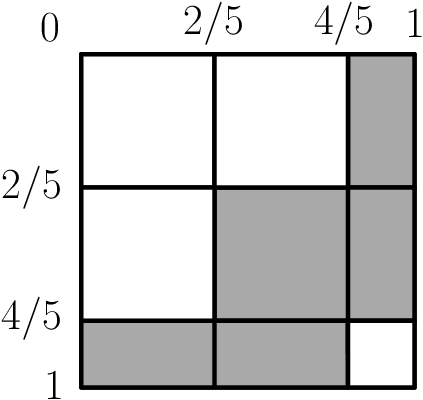}
    \caption{A counterexample to \cite{RW}, Claim 3.8 (shaded squares have $\beta$ value 1).}
    \label{fig:counterexample}
\end{figure}
    
\section{Proof of Theorem \ref{thm:ord}}\label{sec:ord}

Recall that we are assuming $G=([n], E)$.  For a vertex $i$ in  $G$, the {\em right-degree} of $i$ is~$d^+_{G}(i):=|\{j>i: ij \in E\}|$.  Recall that when we write $uv$ for an edge, we implicitly mean $u<v$.
The {\em left vertex} of $vw \in E$ is $v$.  A vertex $v$ has {\em full right-degree} if $E \supset \{vw:w>v\}$. Note that $N_\ord (S_L (k), G) = \sum_{i \in V(G)} \binom{d_G^+ (i)}{k}.$

\begin{proof}[Proof of Theorem \ref{thm:ord}]
    If $m=0$, the theorem is trivial, so we assume that $m >0$.

    Let $G=([n], E)$ with $|E|=m>0$ such that $G$ maximizes the number of copies of $S_L (k)$ among all ordered $n$ vertex $m$ edge graphs. Suppose that there exists $i \in [n]$ such that $d^+_G (i) < d^+_G (i+1)$.
    Let $G_i$ be the ordered graph obtained from $G$ by interchanging the positions of $i$ and $i+1$ in the ordering of $V(G)$. We say that $G_i$ is obtained from $G$ by {\em swapping} $i$ and $i+1$. 
   
    Since $i$ and $i+1$ are consecutive in the ordering of $G$, we have that $d_G^+ (j) = d_{G_i}^+ (j)$ for all $j \in [n] \setminus \{ i, i+1\}$. If there is no edge between $i$ and $i+1$, then $d_G^+ (i) = d_{G_i}^+ (i)$ and $d_G^+ (i+1) = d_{G_i}^+ (i+1)$ as well, so  $N(S_L (k), G_i)=N(S_L(k), G)$. If there is an edge between $i$ and $i+1$, then $d_{G_i}^+(i)=d_G^+(i)-1$ and $d_{G_i}^+(i+1)=d_G^+(i+1)+1$. By convexity of the binomial coefficient and the fact that $d_G^+ (i) < d_G^+ (i+1)$, \begin{align*}
        N_\ord (S_L (k), G_i) ~ -& ~ N_\ord (S_L (k), G) = \sum_{j \in V(G_i)} \binom{d_{G_i}^+ (j)}{k} - \sum_{j \in V(G)} \binom{d_G^+ (j)}{k} \\
        &= \paren{\binom{d_{G_i}^+ (i)}{k} + \binom{d_{G_i}^+ (i + 1)}{k}} - \paren{\binom{d_G^+ (i)}{k} + \binom{d_G^+ (i + 1)}{k}} \\
        &= \paren{\binom{d_G^+ (i) - 1}{k} + \binom{d_G^+ (i + 1)+ 1}{k}} - \paren{\binom{d_G^+ (i)}{k} + \binom{d_G^+ (i + 1)}{k}} > 0.
    \end{align*} This contradicts our choice of $G$, so there is no $i \in [n]$ such that $d_G^+ (i) < d_G^+ (i+1)$ and $\{i,i+1\} \in E$.  We repeatedly swap consecutive pairs of vertices $v< w$ if the right degree of $v$ is less than the right degree of $w$ in the current graph until no such pairs $v,w$ exist. Call the resulting graph $G'$. Then  $d^+_{G'} (v)  \ge d^+_{G'} (w)$ for all $v<w$ and $N_\ord (S_L (k), G') = N_\ord (S_L (k), G)$.
    
    Let $v := \min \{ i \in [n] : d_{G'}^+ (i) < n - i \}$ and choose  $w\in [n]$  such that $vw \not\in E(G')$.
    Let $x := \max \{ i\in [n] : d^+_{G'} (i) > 0 \}$.  We will show that $v=x$ or $x+1$. Note that if $v = 1$, then $x\ge 1 = v$; if $v\ge 2$, then the choice of $v$ implies that $d^+_{G'} (v-1) = n - v + 1 >0$, so $x \ge v-1$. Thus $v \le x+1$ and it remains to show that $v \geq x$. Fix $y\in [n]$ such that $xy \in E (G')$.
    
    Suppose, for contradiction, that $v < x$. Let $H = ([n], E(G') \cup vw \setminus xy)$. Then $d^+_H(v)=d^+_{G'}(v)+1$ and $d^+_H(x)=d^+_{G'}(x)-1$.
    By convexity of the binomial coefficient and the fact that $d_{G'}^+ (v) \ge d_{G'}^+ (x)$, 
    \[N_\ord (S_L (k), H) > N_\ord (S_L (k), G')= N_\ord (S_L (k), G). \] This contradicts our choice of $G$, so it must be the case that $v=x$ or $x+1$.  This means that \[ E(G') = \{ ij : i \in [v-1], j\in [n] \} \cup A \] for some $A \subset \{ vj : j \in \{v+1, v+2, \ldots, n\} \}$ with $|A| = b$ for some $0 \le b < n-v$. Since $d^{+}_{G'}(v)=b$ and $d^+_{G'}(w)=0$ for $w>v$, 
    \[N_\ord (S_L (k), G) = N_\ord (S_L (k), G')
    =N_\ord (S_L (k), S_L (n,m))\]
    as required.

    Let $G=([n], E)$ with $|E|=m$ such that $G$ minimizes the number of copies of $S_L (k)$ among all ordered $n$ vertex $m$ edge graphs. Suppose further that there are some $i,j \in V(G)$ with $i < j$ such that $d^+_G (j) < n-j$ and $d^+_G(i) > d^+_G(j)$. Then choose $v \in [n]$ such that $j < v$ and $jv \notin E$. As $d^+_G(i) > d^+_G (j) \ge 0$, we also choose $w \in [n]$ such that $i < w$ and $iw \in E$. Let $H = ([n], E \cup jv \setminus iw)$. Then $d^+_H(i)=d^+_{G}(i)-1$ and $d^+_H(j)=d^+_{G}(j)+1$. By convexity of the binomial coefficient, 
    \[N_\ord (S_L (k), H)\leq N_\ord (S_L (k), G),\] where equality holds exactly when $d_G^+(i) = d_G^+(j) + 1$. By our assumption on $G$, we must have $N_\ord (S_L (k), H) = N_\ord (S_L (k), G)$. We repeatedly remove an edge $xu$ and add an edge $yz$ for $x < y$ satisfying $d^+ (y) < n-y$ and $d^+(x) > d^+(x)$ until no such pairs $x,y$ exist.
    Call the resulting graph $G'$ and note that $G'$ has $m$ edges. Then $N_\ord (S_L (k), G') = N_\ord (S_L (k), G)$ and \[\text{for all $i < j$ in $G'$, either $d^+_{G'} (j) = n - j$ or $d^+_{G'} (i) \le d^+_{G'} (j)$} \qquad(*).\] 
    We note here that the right degree sequence of $S_R(n,m)$ is given by \begin{equation}\label{eqn:deg}
        0,1,\ldots, a-1, \underbrace{a, \ldots, a}_{n-a-b}, \underbrace{a+1, \ldots, a+1}_{b}.
    \end{equation}
    Our plan is to show that the right degree sequence of $G'$ is the same as (\ref{eqn:deg}). This will allow us to conclude that $N_\ord (S_L (k), G') =N_\ord (S_L (k), S_R (n,m))$.
    
    Set $v := \min \{ j \in [n] : d^+_{G'} (j) = n-j \}$. Then $d^+_{G'} (i) \le d^+_{G'} (j)$ for all $i < j \le v-1$ by $(*)$. Furthermore,  if there exists $i\ge v$ such that $d^+_{G'} (i) < n - i$, then we can find consecutive vertices $x,y$ such that $v \le x<y$, $d^+_{G'}(y)<n-y$
    and $d^+_{G'}(x)=n-x$. But this is impossible as ($*$) implies that  $d^+_{G'}(y)\ge d^+_{G'}(x)=n-x>n-y$.
    Consequently, $d^+_{G'} (i) = n - i$
    for all $i \in \{ v, v+1, \ldots, n \}$. 
    
    If $v=n$, then $d^+_{G'} (n-1) =0$ and $d^+_{G'} (i) \le d^+_{G'} (n-1) = 0$ for all $i \in [n-1]$, so $|E(G')| = 0$. This contradicts $m>0$, so we must have $v < n$. If  $d^+_{G'} (i) \geq n-v +1$ for some $i \in [v-1]$, then since $i \leq v-1$, we have $d^+_{G'} (v-1) \geq d^+_{G'} (i) \ge n-v+1 = n - (v - 1)$. This implies that $d^+_{G'}(v-1)=n-(v-1)$, contradicting the choice of $v$. Therefore
    $v < n$ and  $d^+_{G'} (i)\le  n-v$ for all $i \in [v-1]$. 
    
    Next, suppose that $d^+_{G'} (1) < n-v-1$. Note that $d^+_{G'} (v) = n-v > n-v-1 > d^+_{G'} (1)$ implies that $d^+_{G'} (v) \ge d^+_{G'} (1) + 2$. Choose $x \in [n]\setminus \{ 1 \}$ such that $1x \notin E(G')$ and $y \in \{ v+1, \ldots, n \}$ such that $vy \in E(G')$. Let $H' = ([n], E(G') \cup 1x \setminus vy)$. Then $d^+_{H'} (1)=d^+_{G'} (1) - 1$ and $d^+_{H'} (v)=d^+_{G'} (v) + 1$. By convexity of  binomial coefficients and the fact that $d^+_{G'} (v) \geq d^+_{G'} (1)+2$, 
    \[N_\ord (S_L (k), H') < N_\ord (S_L (k), G') = N_\ord (S_L (k), G).\]
    This contradicts the choice of $G$, so $n-v-1\le d^+_{G'} (1) \le n-v$.
    
    Since the right-degrees of vertices $x$ are nondecreasing for $1\le x \le v$,  there is some $0 < b_0 \le v$ such that $d^{+}_{G'}(i)= n-v - 1$ for all $i \in [v-b_0]$, $d^{+}_{G'}(i)= n-v$ for the remaining $b_0$ vertices $v-b+1 \le i \le v$; we have already observed that $d^{+}_{G'}(i)= n-i$ for all $i > v$. Thus the right degree sequence of $G'$ is given by \begin{equation}\label{eqn:deg2}
        0,1,\ldots, n-v-2, \underbrace{n-v-1, \ldots, n-v-1}_{v+1-b_0}, \underbrace{n-v, \ldots, n-v}_{b_0}.
    \end{equation} If $b_0 = v$, set $b=0$ and $a=n-v$. Otherwise, set $b = b_0$ and $a = n - v- 1$. In either case, the degree sequence of $S_R (n,m)$ in (\ref{eqn:deg}) is the same as that of $G'$ in (\ref{eqn:deg2}) and therefore 
    \[N_\ord (S_L (k), G) 
    = N_\ord (S_L (k), G')
    = N_\ord (S_L (k), S_R (n,m))\] as required.
\end{proof}

\section{Proof of Theorem \ref{thm:color}}\label{sec:color}

Let $(G_n)_{n=1}^\infty$ be a sequence of graphs with $\lim_{n \to \infty} |V(G_n)| = \infty$. The sequence $(G_n)_{n=1}^\infty$ is {\em $F$-good} if both  $x = \lim_{n \to \infty} \varrho (G_n)$  and $y = \lim_{n\to \infty} \varrho_\ind (F, G_n)$ exist. In this case, we say that $(G_n)_{n=1}^\infty$ {\em realizes} $(x,y)$. Define \[ I_{\ind} (F, x) := \sup \{ y : (x,y) \in [0,1] \text{ is realized by some $F$-good } (G_n)_{n=1}^\infty \}. \] 
The following two results will be needed to give upper bounds.

\begin{thm}[Kruskal-Katona \cite{13, 14}]\label{thm:KK}
    Let $r \ge 2$ be an integer. Then for every $x \in [0, 1]$ we have $I_\ind (K_r, x) \le x^{r/2}$. 
\end{thm}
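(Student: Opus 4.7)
The plan is to deduce Theorem~\ref{thm:KK} directly from the classical Kruskal-Katona theorem \cite{13, 14} in its real-variable formulation, which states that if $G$ has $m$ edges and $a\ge 0$ is the unique real number with $\binom{a}{2}=m$, then
\[ N(K_r, G) \le \binom{a}{r} := \frac{a(a-1)\cdots(a-r+1)}{r!}. \]
It suffices to bound $\lim_{n\to\infty}\varrho_\ind(K_r, G_n)$ along an arbitrary $K_r$-good sequence $(G_n)_{n=1}^{\infty}$ with $|V(G_n)|=N_n\to\infty$ and $\varrho(G_n)\to x$, and then take the supremum.

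For such a sequence, I would set $m_n := |E(G_n)|$ and let $a_n \ge 0$ be defined by $\binom{a_n}{2} = m_n$. The identity $\binom{a_n}{2}/\binom{N_n}{2}=\varrho(G_n)\to x$, combined with the elementary asymptotics $\binom{a_n}{2} \sim a_n^2/2$ and $\binom{N_n}{2} \sim N_n^2/2$, forces $a_n/N_n \to \sqrt{x}$. Since an induced copy of $K_r$ is the same as a $K_r$-subgraph, $N_\ind(K_r, G_n) = N(K_r, G_n)$, and applying Kruskal-Katona and dividing by $\binom{N_n}{r}$ gives
\[ \varrho_\ind(K_r, G_n) \le \frac{\binom{a_n}{r}}{\binom{N_n}{r}} = \prod_{i=0}^{r-1} \frac{a_n - i}{N_n - i} \longrightarrow \paren{\sqrt{x}}^r = x^{r/2}. \]
Taking the limit in $n$ and then the supremum over $K_r$-good sequences yields $I_\ind(K_r,x) \le x^{r/2}$.

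The argument is essentially a routine asymptotic reduction from a cited classical inequality, so I do not anticipate any genuine obstacle. The only points requiring care are invoking the real-valued rather than integer-valued form of Kruskal-Katona (equivalent after a standard smoothing/continuity argument) and checking the boundary values $x=0$ and $x=1$, both of which are immediate from the displayed bound.
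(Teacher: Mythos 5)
Your proposal is correct, and it matches the paper's intent: the paper does not prove Theorem~\ref{thm:KK} at all, but quotes it as a known consequence of the classical Kruskal--Katona theorem \cite{13,14}, and your argument (clique count at most $\binom{a_n}{r}$ when $|E(G_n)|=\binom{a_n}{2}$, then the normalization $a_n/N_n\to\sqrt{x}$ and $\binom{a_n}{r}/\binom{N_n}{r}\to x^{r/2}$) is exactly the standard asymptotic reduction behind that citation. The only caveats are the ones you already flag: one needs the Lov\'asz-type fractional form of Kruskal--Katona and a trivial separate treatment of the case $a_n<r-1$ (where no $K_r$ exists), neither of which is a genuine gap.
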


\begin{thm}[Liu, Mubayi, Reiher \cite{LMR}]\label{thm:LMR}
    Let $s \ge 2$ be an integer. Then for every $x \in [0, 1]$ we have $I_\ind (K_{s,s}, x) \le \binom{2s}{s} x^s / 2^s$.
\end{thm}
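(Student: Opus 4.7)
Passing to the graphon setting, let $W$ be a graphon with $t(|,W)=x$; since $(2s)!/|\aut(K_{s,s})|=\binom{2s}{s}/2$ (using $|\aut(K_{s,s})|=2(s!)^2$ for $s\ge 2$), the target is equivalent to
\begin{equation*}
    t_\ind(K_{s,s},W)\le \frac{x^s}{2^{s-1}}.
\end{equation*}
I would first identify the extremal graphon. Any bipartite graphon consisting of two independent ``sides'' of measures $\alpha$ and $1-\alpha$ fully connected across, with $2\alpha(1-\alpha)=x$, satisfies $t_\ind(K_{s,s},W)=2(\alpha(1-\alpha))^s=x^s/2^{s-1}$ for every valid $\alpha$. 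This matches the finite construction $K_{m,m}$ plus isolated vertices with $m=n\sqrt{x/2}$, whose induced $K_{s,s}$ density is $\binom{m}{s}^2/\binom{n}{2s}\to \binom{2s}{s}(x/2)^s$.

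The natural first step is to reduce to counting complete bipartite substructures. For $s\ge 2$ every induced $K_{s,s}$ has a canonical bipartition into its two independent $s$-sets, so with
\begin{equation*}
    N_{\text{bip}}(G):=\left|\left\{(A,B)\in\tbinom{V(G)}{s}^2:A\cap B=\emptyset,\ uv\in E(G)\text{ for all } u\in A, v\in B\right\}\right|,
\end{equation*}
one has $N_{\text{bip}}(G)\ge 2\,N_\ind(K_{s,s},G)$. Observe that $N_{\text{bip}}(G)$ equals the number of $K_{s,s}$-subgraphs (with prescribed bipartition) in the bipartite double cover $G\times K_2$, a bipartite graph with $2|E(G)|$ edges. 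The bipartite analogue of Theorem \ref{thm:KK} then bounds this by $\binom{\sqrt{2|E(G)|}}{s}^2(1+o(1))$ and yields only $I_\ind(K_{s,s},x)\le \binom{2s}{s}x^s/2$, off from the target by a factor of $2^{s-1}$.

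The main obstacle is closing that gap. The reduction to $N_{\text{bip}}$ is lossy precisely when $G$ has many edges inside would-be independent sets: e.g.\ $N_{\text{bip}}(K_n)$ is of order $n^{2s}$ while $N_\ind(K_{s,s},K_n)=0$. So no refinement of bipartite Kruskal--Katona alone can give the sharp constant; the $(1-W)$ factors in
\begin{equation*}
    t_\ind(K_{s,s},W)=\int_{[0,1]^{2s}}\prod_{i\le s<j}W(x_i,x_j)\prod_{\substack{i<j\le s\\\text{or } s<i<j}}(1-W(x_i,x_j))\,dx
\end{equation*}
must be used essentially. My plan here would be a variational argument: take $W^*$ attaining the supremum of $t_\ind(K_{s,s},\cdot)$ subject to $t(|,\cdot)=x$ (existence follows from weak-$*$ compactness and continuity of $t_\ind$), and use local density-preserving perturbations $W^*+\varepsilon \Delta$ to force $W^*$ to be $\{0,1\}$-valued and supported on a bipartite pattern $[0,\alpha]\times[\alpha,1]$ together with its transpose. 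The technical heart, and where I expect the bulk of the work, is ruling out extremizers whose support has three or more ``block''-like regions: there the $W$ and $(1-W)$ factors interact across many pairs of variables simultaneously, and the correct choice of $\Delta$ must be assembled from a careful second-variation analysis. It is precisely this step that pins down the sharp constant $2^s$ rather than the weaker $2$.
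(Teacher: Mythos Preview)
The paper does not prove Theorem~\ref{thm:LMR}; it is quoted from Liu--Mubayi--Reiher~\cite{LMR} and used as a black box in the proof of Theorem~\ref{thm:color}. So there is no ``paper's own proof'' to compare against.

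As to your proposal itself: it is a plan rather than a proof. Your translation to graphons and the identification of the conjectural extremizers are correct, and you are right that the reduction to $N_{\text{bip}}$ together with bipartite Kruskal--Katona only yields the weak bound $\binom{2s}{s}x^s/2$. But the decisive step --- the variational argument that forces an extremal $W^*$ to be $\{0,1\}$-valued and supported on a single bipartite block --- is asserted, not carried out. Saying that ``local density-preserving perturbations'' and a ``second-variation analysis'' will rule out extremizers with three or more blocks is exactly the point that needs an argument; nothing you have written constrains the structure of $W^*$ beyond what compactness gives. In particular, the first variation with a Lagrange multiplier for the edge-density constraint produces a pointwise condition on $W^*$ involving the induced-$K_{s,s}$ ``edge functional'' at $(x,y)$, but showing that this forces $W^*(x,y)\in\{0,1\}$ and then that the $\{0,1\}$-valued optimizer has only two parts is genuinely nontrivial: the $1-W$ factors make the functional non-monotone in $W$, so the usual Kruskal--Katona-style monotonicity shortcuts are unavailable.

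If you want to pursue the sharp bound, you should consult~\cite{LMR} directly; their argument is short and does not go through a full variational characterisation of the extremal graphon.
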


\begin{proof}[Proof of Theorem~\ref{thm:color}]

    \noindent
    We address the two cases separately. 
    \medskip
        
    \noindent{\bf Case 1:}   $\sqrt{x_b} + \sqrt{x_g} \le 1$.
    
    \noindent We first prove the upper bound.  Suppose that $(G_n)_{n = 1}^\infty$ be a $K_{s,t}'$-good sequence that realizes $( (x_b, x_g), I_3 (K_{s,t}', (x_b, x_g)) )$ and let $f_n$ be the coloring function for $G_n$. Let $K_s' = ([s], \binom{s}{2})$ with coloring function $f_b \equiv \text{blue}$. Similarly, let $K_t' = ([t], \binom{t}{2})$ have coloring function $f_g \equiv \text{green}$. Since $K_{s,t}'$ contains exactly one copy of $K_s'$ and one copy of $K_t'$,
    \begin{equation} \label{eqn:st}
        N_3 (K_{s,t}', G_n) \leq N_3 (K_{s}', G_n) N_3 (K_{t}', G_n).
    \end{equation}

    By deleting edges that are not colored blue and applying Theorem \ref{thm:KK}, we obtain \[ \lim_{n \to \infty} 
    \frac{N_3 (K_{s}', G_n)}{\binom{|V (G_n)|}{s}} \le I_3 (K_s', (x_b, x_g)) \le I_\ind (K_s, x_b) \le x_b^{s/2}. \]  Similarly, by deleting edges not colored green, we obtain \[ \lim_{n \to \infty} \frac{N_3 (K_{t}', G_n)}{\binom{|V (G_n)|}{t}} \le I_3 (K_t', (x_b, x_g)) \le I_\ind (K_t, x_g) \le x_g^{t/2}. \] Together with (\ref{eqn:st}), we get 
    \[I_3 (K_{s,t}', (x_b, x_g)) 
    \le \lim_{n \to \infty} \frac{x_b^{s/2} \binom{|V (G_n)|}{s} \cdot x_g^{t/2} \binom{|V (G_n)|}{t}}{\binom{|V (G_n)|}{s+t}} 
    = x_b^{s/2} x_g^{t/2} \binom{s+t}{s}.\]
    We now prove the lower bound. As $
        \lfloor n \sqrt{x_b} \rfloor + \lfloor n \sqrt{x_g} \rfloor \le n \sqrt{x_b} + n \sqrt{x_g} \le n,
    $ there exist disjoint subsets  $A_n$ and  $B_n$ of $[n]$ with  $|A_n| = \lfloor n \sqrt{x_b} \rfloor$ and $|B_n| = \lfloor n \sqrt{x_g} \rfloor$ for all $n$. For each $n \ge 1$, let  $G_n = ([n], \binom{[n]}{2})$ have coloring function $f_n$ defined by \[ f_n (ij) = \begin{cases}
        \text{blue} & \text{ if } i,j \in A_n, \\
        \text{green} & \text{ if } i,j \in B_n, \\
        \text{red} & \text{ otherwise.} 
    \end{cases} \]
    \begin{figure}[h]
        \centering
        \includegraphics[scale=1]{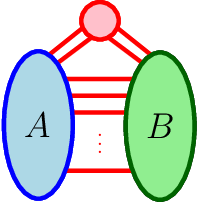}
        \caption{Construction maximizing $K_{s,t}'$-density when $\sqrt{x_b} + \sqrt{x_g} \le 1$.}
        \label{fig:K4construction1}
    \end{figure}
    \noindent 
    Clearly, $
    \lim_{n\to \infty} \varrho_{\text{blue}} (G_n)  = x_b$ and $\lim_{n\to \infty} \varrho_{\text{green}} (G_n) = x_g$. If $A$ is a copy of $K_s'$ in $G_n$ and $B$ is a copy of $K_t'$ in $G_n$, then $A \cup B$ is a copy of $K_{s,t}'$ in $G_n$, as $f_n (ij) = \text{red}$ for all $i \in A_n$, $j \in B_n$. Thus, $(G_n)_{n = 1}^\infty$ realizes $(x_b, x_g, y)$ where \[y = \lim_{n \to \infty} \varrho_3 (K_{s,t}', G_n)= \lim_{n \to \infty} \frac{\binom{\lfloor n \sqrt{x_b} \rfloor}{s} \cdot \binom{\lfloor n \sqrt{x_g} \rfloor}{t}}{\binom{n}{s + t}} 
    = \lim_{n \to \infty} \frac{ (s + t)! n^s x_b^{s/2} n^t x_g^{t/2}}{n^{s + t} s! t!} 
    = x_b^{s/2} x_g^{t/2} \binom{s+t}{s}.\] Consequently, 
    $ I_3 (K_{s,t}', (x_b, x_g)) \ge x_b^{s/2} x_g^{t/2} \binom{s+t}{s}$.

    \medskip
    
    \noindent{\bf Case 2:}    $\sqrt{x_b} + \sqrt{x_g} > 1$.
    
    \noindent 
    We first prove the upper bound. Set $x_r := 1 - x_b - x_g$.  Let $(G_n)_{n = 1}^\infty$ be a $K_{s,s}'$-good sequence that realizes $( (x_b, x_g), I_3 (K_{s,s}', (x_b, x_g)) )$. 
    Every copy of $K_{s,s}'$ contains a red copy of $K_{s,s}$. Consequently, Theorem \ref{thm:LMR} implies that
    \[  I_3 (K_{s,s}', (x_b, x_g)) = \lim_{n \to \infty} \frac{N_3 (K_{s,s}', G_n)}{\binom{|V (G_n)|}{2s}}  \le I_\ind (K_{s,s}, x_r) \le \paren{\frac{ x_r}{2}}^s \binom{2s}{s}. \]
    We now prove the lower bound. Note that
    \[\lfloor n \sqrt{x_b} \rfloor + \left\lfloor n \frac{x_r}{2 \sqrt{x_b}} \right\rfloor \le 
    \frac{2x_b + x_r }{2 \sqrt{x_b}} n
    = \frac{1 +x_b - x_g }{2 \sqrt{x_b}} n 
    < \frac{2 \sqrt{x_b}}{2 \sqrt{x_b}} n = n\] since $\sqrt{x_b} + \sqrt{x_g} > 1$ implies $x_g > 1 - 2 \sqrt{x_b} + x_b$. For each $n$, let $A_n \sqcup B_n \subseteq [n]$ where $|A_n| = \lfloor n \sqrt{x_b} \rfloor$ and $|B_n| = \lfloor n x_r/ (2 \sqrt{x_b}) \rfloor$ and let  $G_n = ([n], \binom{[n]}{2})$ have coloring function $f_n$ defined by 
    \[ f_n (ij) = \begin{cases}
        \text{blue} & \text{ if } i,j \in A_n, \\
        \text{red} & \text{ if } i \in A_n, j \in B_n \text{ or } j \in A_n, i\in B_n, \\
        \text{green} & \text{ otherwise.}
    \end{cases} \]
    \begin{figure}[h]
        \centering
        \includegraphics[scale=1]{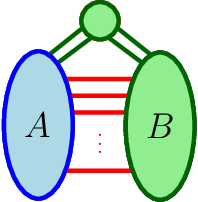}
        \caption{Construction maximizing $K_{s,s}'$-density when $\sqrt{x_b} + \sqrt{x_g} > 1$.}
        \label{fig:K4construction2}
    \end{figure}
    Now $\lim_{n \to \infty} \varrho_{\text{blue}} (G_n) = x_b$, and 
    \[ \lim_{n \to \infty} \varrho_{\text{red}} (G_n) = \lim_{n\to \infty} \frac{\lfloor n \sqrt{x_b} \rfloor \cdot \lfloor n x_r / 2 \sqrt{x_b} \rfloor}{\binom{n}{2}} = \frac{2 x_r \sqrt{x_b}}{2 \sqrt{x_b}} = x_r. \]
    We conclude  that $\lim_{n \to \infty} \varrho_{\text{green}} (G_n) = 1-x_b-x_r=x_g$. Every pair of $s$ vertices in $A_n$ and $s$ vertices in $B_n$ induces a copy of $K_{s,s}'$, so $(G_n)_{n = 1}^\infty$ realizes $( x_b, x_g, y )$ where 
    \[ y = \lim_{n\to \infty} \frac{\binom{\lfloor n \sqrt{x_b} \rfloor}{s} \cdot \binom{\lfloor n x_r / 2 \sqrt{x_b} \rfloor}{s}}{\binom{n}{2s}} = \lim_{n \to \infty} \frac{n^s x_b^{s/2} n^s x_r^s x_b^{-s/2}}{n^{2s} 2^s} \cdot \binom{2s}{s} =  \paren{\frac{x_r}{2}}^s \binom{2s}{s}. \]
    Consequently, $I_3 (F, (x_b, x_g)) \ge \paren{\frac{x_r}{2}}^s \binom{2s}{s}$.
\end{proof}

\section*{Appendix}

\begin{proof}[Proof of Lemma \ref{lem:1-W}]
    Let $F \in \C$ be the function that we want to prove $\delta$-good for $1 - W$. Using the fact that $G : [0,1] \to \R$ given by $x \mapsto F(1-x)$ from \cite[Lemma 3.1(ii)]{RW} is $\delta$-good for $W$ we find that \[\gamma = t (|, 1-W) = 1 - t (|, W) \text{ and } \eta = 1 - \sqrt{1 - \gamma}\] satisfy \begin{align*}
        \int_0^1 & F (d_{1-W} (x)) = \int_0^1 G (d_W (x)) \\
        &< \max \brac{ \paren{1 - \sqrt{1 - \gamma}} G (0) + \sqrt{1 - \gamma} G \paren{\sqrt{1 - \gamma}},~ \sqrt{\gamma} G(1 - \sqrt{\gamma}) + (1 - \sqrt{\gamma}) G(1) } + \delta \\
        &= \max \{ (1 - \eta) F(\eta) + \eta F(1),~ (1 - \sqrt{\gamma}) F (0) + \sqrt{\gamma} F (\sqrt{\gamma}) \} + \delta
    \end{align*} as desired.
\end{proof}

\begin{proof}[Proof of Lemma \ref{lem:lambdaW}]
    Let $F \in \C$ be any function that we want to prove $\delta$-good for $[\lambda, W]$. By \cite[Lemma 3.1(iii)]{RW}, the function $H: [0,1] \to \R$ given by $H(x) = F ((1 - \lambda))$ for all $x \in [0,1]$ is in $\C$. Thus it is $\delta$-good for $W$, which tells us that \[ \int_0^1 H (d_W (x)) ~dx < \max \{ (1 - \sqrt{\gamma}) H(0) + \sqrt{\gamma} H (\sqrt{\gamma}), (1 - \eta) H (\eta) + \eta H(1) \} + \delta, \] where $\gamma = t (|, W)$ and $\eta = 1 - \sqrt{1 - \gamma}$. Since \begin{align*}
        \int_0^1 F (d_{[\lambda, W]} (x)) ~dx &= \lambda F(0) + \int_\lambda^1 F \paren{(1 - \lambda) d_W \paren{\frac{x - \lambda}{1 - \lambda}}} ~dx \\
        &= \lambda F(0) + (1 - \lambda) \int_0^1 H (d_W (x)) ~dx,
    \end{align*} it follows that either \begin{align*}
        \int_0^1 F (d_{[\lambda, W]} (x)) ~dx &< \lambda F(0) + (1 - \lambda) ((1 - \sqrt{\gamma}) F(0) + \sqrt{\gamma} F ((1 - \lambda) \sqrt{\gamma}) + \delta) \\
        &= \lambda F(0) + (1 - \lambda) (1 - \sqrt{\gamma}) F(0) +  (1 - \lambda) \sqrt{\gamma} F ((1 - \lambda) \sqrt{\gamma}) + (1 - \lambda) \delta \\
        &\le \lambda F(0) + (1 - \lambda) (1 - \sqrt{\gamma}) F(0) +  (1 - \lambda) \sqrt{\gamma} F ((1 - \lambda) \sqrt{\gamma}) + \delta,
    \end{align*} or \begin{align*}
        \int_0^1 F (d_{[\lambda, W]} (x)) ~dx &< \lambda F(0) + (1 - \lambda) ((1 - \eta) F ((1 - \lambda) \eta) + \eta F (1 - \lambda) + \delta) \\
        &= \lambda F(0) + (1 - \lambda) (1 - \eta) F ((1 - \lambda) \eta) + (1 - \lambda) \eta F (1 - \lambda) + (1 - \lambda) \delta \\
        &\le \lambda F(0) + (1 - \lambda) (1 - \eta) F ((1 - \lambda) \eta) + (1 - \lambda) \eta F (1 - \lambda) + \delta.
    \end{align*} In the former case the right side simplifies to \[ \paren{1 - \sqrt{\gamma'}} F (0) + \sqrt{\gamma'} F \paren{\sqrt{\gamma'}} + \delta, \] where $\gamma' = (1 - \lambda)^2 \gamma = t (|, [\lambda, W])$, meaning that $F$ is, in particular, $\delta$-good for $[\lambda, W]$.

    So we may assume that the second alternative occurs. Setting $x = \lambda$, $y = (1 - \lambda) \eta$, and $z = (1 - \lambda) (1 - \eta)$ we thus get \[ \int_0^1 F (d_{[\lambda, W]} (x)) ~dx < x F(0) + z F(y) + y F(y+1) + \delta.\] Since $y^2 + 2yz = (1 - \lambda)^2 (2 \eta - \eta^2) = (1 - \lambda)^2 \gamma = \gamma'$, it follows in view of \cite[Lemma 3.3]{RW} that \[ \int_0^1 F (d_{[\lambda, W]} (x)) ~dx < \max \brac{\paren{1 - \sqrt{\gamma'}} F (0) + \sqrt{\gamma'} F \paren{\sqrt{\gamma'}},~ (1 - \eta') F(\eta') + \eta' F(1)} + \delta, \] where $\eta' = 1 - \sqrt{1 - \gamma'}$. This tells us that $F$ is indeed $\delta$-good for $[\lambda, W]$.
\end{proof}

\end{document}